\newtheorem{thm}{Theorem}[section]
\newtheorem{lem}[thm]{Lemma}
\newtheorem{prop}[thm]{Proposition}
\newtheorem{rem}[thm]{Remark}
\newtheorem{definition}[thm]{Definition}
\newcommand{\bremark}{\begin{rem} \textup}
\newcommand{\eremark}{\end{rem} }
\newcommand{\cuad}{{\sqcap\kern-.68em\sqcup}}
\newcommand{\R}{{\mathbb{R}}}
\numberwithin{equation}{section}
\thanks{
AF is partially supported  by ERC-2011-grant: \emph{Epsilon}. \\
BS is partially supported by the
Italian PRIN Research Project 2007: {\em Metodi Variazionali e Topologici
nello Studio di Fenomeni non Lineari}, and is also partially supported by  ERC-2011-grant: \emph{Epsilon}.\\
}
\begin{document}

\parindent 0pc
\parskip 6pt
\overfullrule=0pt

\title[Qualitative properties ]{Qualitative properties and classification \\
of nonnegative solutions to \\  $ -\Delta u=f(u) $ \\ in unbounded  domains \\
when $f(0)<0$}


\author[
A. Farina and B. Sciunzi]{A. Farina and B. Sciunzi}
       \thanks{Address: {\em AF} --
 LAMFA, CNRS UMR 7352, Universit\'{e} de Picardie Jules Verne, 33, Rue Saint-Leu, 80039 Amiens Cedex 1, France.
 E-mail: {\tt
alberto.farina@u-picardie.fr}. {\em BS} --
Universit\`a della Calabria -- (Dipartimento di Matematica e Informatica) -- V. P.
 Bucci 1-- Arcavacata di Rende (CS), Italy. E-mail: {\tt
 sciunzi@mat.unical.it}.\\}

\thanks{\it 2010 Mathematics Subject
 Classification: 35J61,35B51,35B06}

\begin{abstract}
We consider nonnegative solutions to $-\Delta u=f(u)$ in unbounded euclidean domains, where $f$ is merely locally Lipschitz continuous and satisfies $f(0)<0$.
In the half-plane, and without any other assumption on $u$, we prove that $u$ is either one-dimensional and periodic or positive and strictly monotone increasing in the direction orthogonal to the boundary. Analogous results are obtained if the domain is a strip. As a consequence of our main results, we answer affirmatively to a conjecture and to an open question posed by Berestycki, Caffarelli and Nirenberg. We also obtain some symmetry and monotonicity results in the higher-dimensional case.
\end{abstract}

\maketitle

\section{Introduction and main results}

We study qualitative properties of \emph {nonnegative} solutions to $-\Delta u=f(u)$ in unbounded euclidean domains, where $f$ is \emph {merely locally Lipschitz continuous} and satisfies $f(0)<0$.
In particular, we are interested in proving the one-dimensional symmetry, the monotonicity and/or the periodicity of the considered solutions.

Let us start by considering the case of the open half-plane $ \mathbb{R}^2_+: = \{ (x,y)  \in \R^2 : y >0\}$
\begin{equation}\label{E:P}
\begin{cases}
-\Delta u=f(u) & \text{ in } \mathbb{R}^2_+\\
\quad u \geqslant 0 & \text{ in } \mathbb{R}^2_+\\
\quad u=0\,\, &\text{ on } \partial\mathbb{R}^2_+.
\end{cases}
\end{equation}


As a consequence of more general results that we shall state later, we prove

\begin{thm}\label{casteorem}
Assume that $f$ is locally Lipschitz continuous on $[0\,, +\infty)$ with $f(0) <0$.
Let $u\in C^2(\overline{\mathbb{R}^2_+})$ be a nonnegative solution to  \eqref{E:P}. Then, either $u$ is one-dimensional and periodic, or
$u$ is positive and strictly monotone increasing in the direction orthogonal to the boundary with $\partial_y u >0$ in $\mathbb{R}^2_+$.
\end{thm}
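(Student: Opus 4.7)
My plan is to apply the moving plane method in the direction $y$ orthogonal to the boundary; the sign condition $f(0)<0$ will play the role usually played by decay of $u$ at infinity.

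First, I would extract the boundary information forced by $f(0)<0$. Since $u\equiv 0$ on $\pa\R^2_+$, one has $u_{xx}(x,0)=0$, and the equation gives
\begin{equation*}
u_{yy}(x,0)=-f(0)>0\qquad\forall\, x\in\R.
\end{equation*}
Together with $u\ge 0$ and $u(\cdot,0)=0$, a Taylor expansion then yields a uniform quadratic lift-off: there exist $\de_0,c_0>0$ with $u(x,y)\ge c_0 y^2$ on $\R\times[0,\de_0]$. This uniform-in-$x$ positivity is what lets the moving planes be initiated without any decay of $u$ at infinity.

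Next, for $\la>0$, I would set $u_\la(x,y):=u(x,2\la-y)$ and $w_\la:=u-u_\la$ on $\Sigma_\la:=\R\times(0,\la)$. The local Lipschitz continuity of $f$ makes $w_\la$ a solution of a linear elliptic equation $-\Delta w_\la+c_\la w_\la=0$ with a locally bounded coefficient, subject to $w_\la(x,0)=-u(x,2\la)\le 0$ and $w_\la(x,\la)=0$. The weak maximum principle in narrow strips (Berestycki--Nirenberg) gives $w_\la\le 0$ in $\Sigma_\la$ for $\la$ small, so
\begin{equation*}
\la^*:=\sup\{\la>0:w_\mu\le 0\text{ in }\Sigma_\mu\text{ for every }\mu\in(0,\la]\}
\end{equation*}
is well-defined. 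If $\la^*=+\infty$, the choice $\mu=(y+z)/2$ reads off $u(x,y)\le u(x,z)$ whenever $0<y<z$, hence $\pa_y u\ge 0$; the strong maximum principle and Hopf lemma applied to $\pa_y u$, which satisfies a linear elliptic equation with bounded coefficient, upgrade this to $\pa_y u>0$ in $\R^2_+$ and so produce the second alternative. If $\la^*<+\infty$, Hopf at $T_{\la^*}:=\{y=\la^*\}$ together with the uniform lift-off (which rules out the moving plane being blocked at infinity, via a translation/compactness argument) forces equality $w_{\la^*}\equiv 0$, and the resulting reflection symmetry $u(\cdot,y)=u(\cdot,2\la^*-y)$ in $\Sigma_{\la^*}$, iterated against $u(x,0)=0$, gives $2\la^*$-periodicity of $u$ in $y$.

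The main obstacle lies in passing from ``periodic in $y$'' to ``one-dimensional and periodic''. In this regime $u$ is a nonnegative Dirichlet solution on the strip $S:=\R\times(0,2\la^*)$, symmetric about $y=\la^*$ and strictly monotone in $y$ on $(0,\la^*)$. My plan is to run a second round of moving planes, or a sliding argument, in the $x$-direction on $S$, again using the uniform lift-off $u_{yy}(\cdot,0)=-f(0)=u_{yy}(\cdot,2\la^*)$ in place of decay at $x=\pm\infty$. Translation invariance of the equation in $x$, combined with the symmetric argument from the opposite side, should yield both $\pa_x u\ge 0$ and $\pa_x u\le 0$, hence $\pa_x u\equiv 0$. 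The genuinely delicate point, which I expect to be the crux, is the absence of an a priori $L^\infty$ bound for $u$ on $S$: such a bound is needed both to keep the linearized coefficient bounded and to compare $u$ with its horizontal reflections. Producing it (from the Dirichlet condition on $\pa S$, the sign of $f(0)$, and interior elliptic estimates), or otherwise bypassing it by a careful localization in $x$, is where the more general results announced just after the theorem are likely to be indispensable.
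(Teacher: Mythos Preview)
Your outline contains two genuine gaps, and both are precisely the difficulties the paper is built to overcome.

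\medskip

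\textbf{The moving plane in the unbounded strip.} You invoke the Berestycki--Nirenberg weak maximum principle in narrow domains to start and continue the reflection $w_\la\le 0$ in $\Sigma_\la$. But that principle requires the zeroth-order coefficient $c_\la$ to be \emph{globally} bounded on the (unbounded) strip, and here $c_\la=(f(u)-f(u_\la))/(u-u_\la)$ is only locally bounded: $u$ is not assumed bounded and $f$ is only locally Lipschitz. The same obstruction recurs at your ``translation/compactness'' step when $\la^*<\infty$: without an a priori $L^\infty$ bound on $u$, the translated sequence need not converge. (Your ``uniform lift-off'' $u(x,y)\ge c_0 y^2$ with $c_0,\de_0$ independent of $x$ also uses more than $u\in C^2(\overline{\R^2_+})$ gives; the remainder in Taylor is controlled only locally in $x$.) The paper avoids all of this by never comparing $u$ and $u_\la$ on an unbounded set: it runs a \emph{rotating plane} argument on bounded triangles $\mathcal T_{x_0,s,\theta}$, where a weak comparison principle in domains of small measure applies with constants depending only on $\sup_{\overline D} u$ for a fixed bounded $D$. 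Letting the slope $\theta\to 0$ recovers $u\le u_\la$ locally, and since $x_0$ is arbitrary, globally.

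\medskip

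\textbf{One-dimensionality in the periodic regime.} Your plan to run moving planes or a sliding argument in the $x$-direction on the strip $S=\R\times(0,2\la^*)$ cannot be initiated: there is no Dirichlet data (and no decay) in $x$ to produce an inequality between $u$ and its horizontal reflections or translates, and the lift-off $u_{yy}(\cdot,0)=-f(0)$ gives information only in $y$. The paper does something different and much cleaner. From $\{y=\la^*\}\subset\{u=0\}$ one has $\nabla u=0$ there (since $u\ge 0$); the symmetry $u\equiv u_{\la^*/2}$ in $\Sigma_{\la^*/2}$ then transfers this to $\{y=0\}$, so $u=\nabla u=0$ on $\partial\R^2_+$. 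Now for each $t\in\R$ the translate $u^t(x,y)=u(x+t,y)$ solves the same equation with the same Cauchy data on $\{y=0\}$, and the \emph{unique continuation principle} forces $u^t\equiv u$. This gives $\partial_x u\equiv 0$ directly, with no need for boundedness of $u$ or comparison in $x$.
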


The  theorem above provides a complete picture of the situation in our general framework.

\begin{thm}\label{Txxx5}
Let $u\in C^2(\overline{\mathbb{R}^2_+})$ be a nonnegative  solution to
\begin{equation}\label{CongBCN}
\begin{cases}
-\Delta u=u-1 & \text{ in } \mathbb{R}^2_+\\
\quad u \geqslant 0 & \text{ in } \mathbb{R}^2_+\\
\quad u=0\,\, &\text{ on } \partial\mathbb{R}^2_+.
\end{cases}
\end{equation}

Then
\[
u(x,y)\,=\,1-\cos y\,.
\]
\end{thm}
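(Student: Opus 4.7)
The plan is to apply Theorem~\ref{casteorem} with $f(u) = u - 1$, which is (analytic, hence) locally Lipschitz on $[0,+\infty)$ and satisfies $f(0) = -1 < 0$. This yields a dichotomy: either (i) $u$ depends only on $y$ and is periodic, or (ii) $u > 0$ in $\mathbb{R}^2_+$ and $\partial_y u > 0$ in $\mathbb{R}^2_+$. The plan is to handle (i) by a direct ODE computation and to rule out (ii) by an eigenvalue comparison.

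\emph{Case (i).} Writing $u(x,y) = v(y)$, the equation reduces to the linear ODE $v''(y) + v(y) = 1$, with $v(0) = 0$ and $v \geq 0$ on $[0,+\infty)$. The general solution is $v(y) = 1 + A\cos y + B\sin y$; the boundary condition forces $A = -1$, so
\[
v(y) \,=\, 1 - \cos y + B\sin y \,=\, 1 + \sqrt{1 + B^2}\,\cos(y - \theta_B)
\]
for a suitable phase $\theta_B$. For $v$ to be nonnegative everywhere the oscillation amplitude $\sqrt{1 + B^2}$ must be $\leq 1$, i.e. $B = 0$, which gives $u(x,y) = 1 - \cos y$, as desired.

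\emph{Case (ii).} The key observation is that $f'(u) \equiv 1$, so differentiating the equation in $y$ shows that $w := \partial_y u$ is a \emph{positive} classical solution on $\mathbb{R}^2_+$ of the linear eigenvalue equation $-\Delta w = w$. I will compare $w$ to the principal Dirichlet eigenfunction on a large disk. Since $\mathbb{R}^2_+$ contains Euclidean disks of arbitrarily large radius (e.g.\ $D_R := B_R((0,R+1))$), and the first Dirichlet eigenvalue of $-\Delta$ on $D_R$ equals $\lambda_1(D_R) = j_0^2/R^2 \to 0$ as $R \to +\infty$, with positive eigenfunction $\varphi_R$, Green's identity on $D_R$ gives
\[
(1 - \lambda_1(D_R)) \int_{D_R} w\,\varphi_R\, dx \,=\, \int_{\partial D_R} w\,\partial_\nu \varphi_R\, d\sigma.
\]
The Hopf lemma yields $\partial_\nu \varphi_R < 0$ on $\partial D_R$, and $w > 0$ throughout $\overline{D_R}$, so the right-hand side is strictly negative while the integrand on the left is strictly positive. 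This forces $\lambda_1(D_R) > 1$ for every $R$, contradicting $\lambda_1(D_R) \to 0$. Hence case (ii) cannot occur.

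The ODE step is routine; the real content is case (ii), where one needs to encode the fact that $\mathbb{R}^2_+$ is ``too large'' to admit a positive solution of the Helmholtz-type equation $-\Delta w = w$. The eigenvalue comparison against arbitrarily large interior disks is the cleanest way to capture this, and crucially it uses the specific form $f(u) = u - 1$ (through $f' \equiv 1$), which is exactly what makes the conclusion of Theorem~\ref{Txxx5} so rigid.
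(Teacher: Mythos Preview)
Your proof is correct and follows essentially the same route as the paper: apply the dichotomy of Theorem~\ref{casteorem}, rule out the monotone alternative by an eigenvalue comparison on large interior disks (the paper does this via the stability inequality $\int |\nabla\varphi|^2 \ge \int f'(u)\varphi^2$ implied by $\partial_y u>0$, you do it by a direct Green's identity computation with $w=\partial_y u$ --- the same idea), and then handle the one-dimensional periodic case by elementary ODE analysis. The only cosmetic difference is that the paper packages the eigenvalue argument into the more general Theorem~\ref{xxx5} (valid whenever $f'\ge c>0$) before specializing to $f(u)=u-1$, and in the ODE step uses the extra information $u_0'(0)=0$ coming from Theorem~\ref{T:1} rather than the nonnegativity constraint you invoke; both routes are equally short.
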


Theorem \ref{Txxx5} provides an affirmative answer to an extended version of a \emph{conjecture} posed by Berestycki, Caffarelli and Nirenberg (see p. 73 of \cite{BCN2}).
It recovers and improves upon a result of \cite{soave} (cf. also \cite{BCN2}) since here no a-priori bound on the solutions is required.

The techniques developed here allows us to consider also problems defined in strips  $\Sigma_{2b}\,:= \{ (x,y)  \in \R^2 : y \in  (0\,,\,2b) \}$, $b >0$.  The following result answers affirmatively to an \emph {open question} raised by
Berestycki, Caffarelli and Nirenberg  (see p. 486 in \cite{BCN1}).\\

\begin{thm}\label{T:3spe}
Let $u\in C^2(\overline{\Sigma_{2b}})$ be a nonnegative  solution to
\begin{equation}\label{E:Pstrip}
\begin{cases}
-\Delta u=f(u), & \text{ in } \Sigma_{2b}\\
\quad u \geqslant 0, & \text{ in } \Sigma_{2b}\\
\quad u=0,\,\, &\text{ on } \partial\Sigma_{2b} \,
\end{cases}
\end{equation}

with $f$ locally Lipschitz continuous on $[0\,, +\infty)$ \emph {(no restriction on the sign of $f(0)).$} Then

\item[$(i)$] if $ f(0) <0$, either $u$ is positive on $\Sigma_{2b}$, symmetric about $\{y=b\}$ with  $\partial_y u >0$ in $\Sigma_{b}$, or $u$ is one-dimensional and periodic. In this case $2b$ is necessarily a multiple of the period of $u$.

\item[$(ii)$]  if $f(0) \ge 0$, either $u$ vanishes identically or it is positive on $\Sigma_{2b}$, symmetric about $\{y=b\}$ with  $\partial_y u >0$ in $\Sigma_{b}$.
\end{thm}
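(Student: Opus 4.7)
\emph{Part (ii).} Since $f(0)\ge 0$, the strong maximum principle forces either $u\equiv 0$ or $u>0$ throughout $\Sigma_{2b}$. Assuming the latter, I would run the classical moving plane method in the $y$-direction: set $u_t(x,y):=u(x,2t-y)$, slide the hyperplane $\{y=t\}$ upward from $t=0$, and compare $u$ with $u_t$ on $\{0<y<t\}$. The narrow-strip maximum principle delivers $u\le u_t$ for small $t$, and the standard continuation argument pushes the critical level up to $t=b$. The symmetric procedure from $\{y=2b\}$ downward forces the critical level to be exactly $b$, giving symmetry about $\{y=b\}$ and $\partial_y u>0$ on $\Sigma_b$.

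\emph{Part (i).} Here $f(0)<0$ precludes a direct use of the strong maximum principle, and the argument dichotomizes according to whether $u$ is positive everywhere. \emph{Case A:} If $u>0$ in $\Sigma_{2b}$, I would adapt the refined moving plane procedure from the proof of Theorem~\ref{casteorem}. Interior positivity, combined with the strict subharmonicity of $u$ in a neighbourhood of $\partial\Sigma_{2b}$ (because $-\Delta u=f(u)\approx f(0)<0$ when $u$ is small), still yields Hopf-type boundary estimates; the sliding procedure then succeeds from both sides and produces the claimed symmetry and monotonicity.

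\emph{Case B:} If $u(x_0,y_0)=0$ at some interior point, the point is a global minimum with $\nabla u(x_0,y_0)=0$ and $\Delta u(x_0,y_0)=-f(0)>0$. I would first propagate this zero along the entire line $\{y=y_0\}$ by rewriting
\[
-\Delta u \;-\; c(x,y)\,u \;=\; f(0),
\]
where $c(x,y):=(f(u)-f(0))/u$ is bounded by the Lipschitz constant of $f$ (extended by $0$ on $\{u=0\}$), and applying a strong minimum principle for this linear operator together with the strict sign of $f(0)$ on the right-hand side; this shows that the set $\{x\in\mathbb{R}:u(x,y_0)=0\}$ is both open and closed, hence equal to $\mathbb{R}$. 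Once $u(\cdot,y_0)\equiv 0$, the vanishing of $\partial_x u$ on this line combined with unique continuation (or a sliding argument in the $x$-direction) forces $u$ to depend only on $y$. The problem then reduces to the ODE $-v''=f(v)$ on $(0,2b)$ with $v(0)=v(2b)=v(y_0)=0$ and $v\ge 0$. The interior minimum gives $v'(y_0)=0$; conservation of the energy $\tfrac12(v')^2+F(v)$ (with $F'=f$) then identifies the orbit in the phase plane as a closed loop through $(0,0)$, so $v$ is periodic, and $v(0)=v(2b)=0$ forces $2b$ to be an integer multiple of the period.

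The principal obstacle is Case~B, specifically the two propagation steps that turn a single interior zero into a full horizontal line of zeros and then into one-dimensional dependence; both are delicate because $f$ is merely Lipschitz and $f(0)<0$ gives the ``wrong'' sign for standard maximum principle arguments. The remainder is a routine adaptation of the moving plane machinery developed for Theorem~\ref{casteorem} together with elementary phase-plane analysis.
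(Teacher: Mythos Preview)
Your Part~(ii) and Case~A of Part~(i) are in the right spirit, though the ``narrow-strip maximum principle'' you invoke is not available here: it requires $u$ bounded or $f$ globally Lipschitz, neither of which is assumed. The paper circumvents this by a \emph{rotating-plane} technique on bounded triangles (Lemmas~\ref{smallperturbations} and~\ref{largeper}), where the comparison principle of Proposition~\ref{maxpri} does apply.

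The genuine gap is Case~B of Part~(i). Your propagation step---from a single interior zero $(x_0,y_0)$ to $u\equiv 0$ on the whole line $\{y=y_0\}$---does not follow from the argument you sketch. Rewriting the equation as $-\Delta u - c\,u = f(0)$ gives $\Delta u + c\,u = -f(0) > 0$, so near the interior minimum $u$ is essentially strictly subharmonic; but subharmonic functions are perfectly allowed to have interior minima without being constant, and no ``strong minimum principle'' is triggered. Concretely, at $(x_0,y_0)$ one only knows $\partial_{xx}u + \partial_{yy}u = -f(0)$ with both second partials nonnegative, and nothing in your argument rules out $\partial_{xx}u(x_0,y_0)>0$, which would make $x_0$ an \emph{isolated} zero of $x\mapsto u(x,y_0)$. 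The openness claim is therefore unproved.

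The paper sidesteps this by a different dichotomy. Rather than ``$u>0$ everywhere versus $u$ has a zero'', it sets $\lambda^* := \sup\{\lambda\in(0,2b]:\ \{y=\mu\}\cap\{u\ne0\}\ne\emptyset \text{ for all } 0<\mu\le\lambda\}$ and splits on $\lambda^*=2b$ versus $\lambda^*<2b$. When $\lambda^*<2b$, the very definition forces $\{y=\lambda^*\}\subseteq\{u=0\}$: a full horizontal line of zeros comes for free, with no local propagation needed. The rotating-plane argument is then run \emph{inside} $\Sigma_{\lambda^*}$ to obtain symmetry about $\{y=\lambda^*/2\}$, which transports $u=0$, $\nabla u=0$ from $\{y=\lambda^*\}$ down to $\{y=0\}$. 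Only at that stage---with Cauchy data vanishing on an entire line---does unique continuation, applied to the translates $u(\cdot+t,\cdot)$, yield one-dimensionality. So your second propagation step (line of zeros $\Rightarrow$ one-dimensional) is correct in spirit, but it needs a full line as input, and that line is manufactured by the moving-plane machinery, not by a maximum-principle argument at a single point.
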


\begin{rem}
Note that the  theorem above also applies when $f(0)\ge0$ and it is \emph{new} even in this case.
\end{rem}






Before proceeding further, let us briefly recall the main difficulties that one has to face when working in our general framework.

When $f(0)<0$, nonnegative solutions are natural and must be taken into account. Indeed, in this case, nontrivial, nonnegative solutions (vanishing somewhere) can exist and sometimes, they are the only nonnegative solutions of the considered problem (this is the case when $f(u) =u-1$, for instance). These phenomena are strongly related to the absence of both the \emph{strong maximum principle} and the \emph{Hopf's lemma}. Hence a deeper and different analysis with respect to the case $f(0)\geqslant 0$ is needed. Further difficulties in the analysis are added by the fact that the solution $u$ is \emph {not} assumed to be bounded and $f$ is \emph {merely locally Lipschitz continuous}. Indeed, in the study of the qualitative properties of the solutions to semilinear problems in unbounded domains, it is always assumed some \emph {a priori} bound on $u$ and/or the \emph{global Lipschitz character} of $f$. These properties ensure, for instance, the possibilty to use elliptic estima
 tes to study the asymptotic behaviour $u$, by means of the translation invariance of the considered problem  and/or to use some comparison principles on unbounded cylindrical domains having small cross section.  The lack of those properties in our general framework will require a different approach to the problem.

To obtain our results we shall use a \emph{rotating plane method} (inspired by \cite {BCN2}, and especially by \cite{DS3}) combined with the \emph{unique continuation principle} (see \cite{FVb} and the references therein). These tools are described and developed in sections 2 and 3.

To continue the description of our results, we denote by $p\,:=\,(x,y)$ a general point in the plane and, for a nonnegative solution $u$ of \eqref{E:P}, we say that $u$ satisfies the property $(\mathcal P_\mu)$ if {\it there exists a real number $ \mu>0$ and a point  $p\in\{y=\mu\}$ such that $u(p)\neq 0$. }

Equivalently :
\begin{equation}\nonumber
(\mathcal P_\mu)\qquad\text{holds if}\qquad\{y=\mu\}\cap\{u\neq0\}\neq\emptyset\,.
\end{equation}

Since $u$ cannot be identically zero by the assumption $ f(0) <0$, we see that the above property $(\mathcal P_\mu)$ is satisfied for some $ \mu>0$.

We shall prove in Theorem \ref{dfbjkrgbkcvdvcgdcg} that the set
\begin{equation}\label{LAMBDA}
\Lambda^*=\Lambda^*(u)\,:=\,\{\lambda>0\,\,:\,\,(\mathcal P_\mu)\quad\text{holds for every} \,\, 0<\mu\leq\lambda\}
\end{equation}

is not empty (in any dimension $N\ge 2$). Therefore we have
\begin{equation}\label{lambda}
\lambda^*=\lambda^*(u)\,:=\, \sup \Lambda^*\, \in (0, +\infty].
\end{equation}

Note that, by a continuity argument, if $\lambda^*$ is finite, we easily get that  $\{y=\lambda^*\}\subseteq \{u=0\}$.

\noindent Recalling the notation
$\Sigma_\lambda:= \{(x,y) \in \R^2 \, |\, 0<y<\lambda\}$,
we have the following

\begin{thm}\label{T:1}
Assume that $f$ is locally Lipschitz continuous on $[0\,, +\infty)$ with $f(0) <0$.
Let $u\in C^2(\overline{\mathbb{R}^2_+})$ be a nonnegative solution to \eqref{E:P}. Then

\item[$(i)$] if $\lambda^*=+\infty$, $u$ is positive and strictly monotone increasing in the y direction, with
\begin{equation}\nonumber
u >0, \qquad \partial_y u > 0\qquad \text{in}\quad \mathbb{R}^2_+.
\end{equation}

\item[$(ii)$] If $\lambda^* < + \infty$, $u$ is one-dimensional and periodic, i.e.
\[
u(x,y)\,=\,u_0(y) \qquad \forall \, (x,y) \in \R^2_+
\]

where $ u_0\in C^2(\R, [0,\infty))$ is periodic of period $\lambda^*$. Moreover, $u_0$ is the unique solution of

\begin{equation}\label{oneD}
- u_0'' = f(u_0) \quad\text{in}\,\,[0,\infty)\qquad
u_0'(0)=u_0(0) = 0 =u_0'(\lambda^*)=u_0(\lambda^*)\, .
\end{equation}

Also, $u$ is symmetric with respect to $\{y=\frac{\lambda^*}{2}\}$ with $\partial_y u > 0$ in $\Sigma_{\frac{\lambda^*}{2}}$.
\end{thm}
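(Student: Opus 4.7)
For case (i), where $\lambda^* = +\infty$, I would combine the moving plane method in the $y$-direction (developed in Sections 2--3) with the unique continuation principle. Introduce the reflection $u_\lambda(x,y) := u(x, 2\lambda - y)$ across $\{y = \lambda\}$ and aim at the reflection inequality $u \leq u_\lambda$ on $\Sigma_\lambda$ for every $\lambda > 0$. The initial step (small $\lambda$) would rely on a weak maximum principle suited to the unbounded setting, while the continuation step would use the property $(\mathcal P_\mu)$ --- which is available for all $\mu > 0$ since $\lambda^* = +\infty$ --- together with the unique continuation principle to prevent the degeneration $u \equiv u_\lambda$ at any threshold. Passing to the limit $\lambda \to +\infty$ would then produce $\partial_y u \geq 0$ on $\R^2_+$. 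Finally, applying the strong maximum principle to $w := \partial_y u$, which satisfies the linear elliptic equation $-\Delta w = f'(u)\, w$, would give either $w \equiv 0$ (impossible since $u$ is nontrivial on each horizontal line by $(\mathcal P_\mu)$) or $w > 0$ throughout $\R^2_+$, from which $u > 0$ follows by integration from the Dirichlet boundary.

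For case (ii), where $\lambda^* < +\infty$, the continuity observation recalled in the excerpt gives $u \equiv 0$ on $\{y = \lambda^*\}$. Being a zero-minimum of the nonnegative function $u$, this line is automatically a critical set, so $\nabla u \equiv 0$ there. I would then introduce the unique solution $u_0 : [0, +\infty) \to \R$ of the Cauchy problem $-u_0'' = f(u_0)$, $u_0(\lambda^*) = u_0'(\lambda^*) = 0$, and regard it as a function of $y$ alone. The difference $\phi := u - u_0$ would be a $C^2$ solution of
\[
-\Delta \phi = c(x,y)\, \phi, \qquad c(x,y) := \int_0^1 f'\bigl(t\, u(x,y) + (1-t)\, u_0(y)\bigr)\, dt,
\]
with $c$ locally bounded (by the local Lipschitz character of $f$), and both $\phi$ and $\partial_y \phi$ vanishing identically on $\{y = \lambda^*\}$. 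The unique continuation principle for linear elliptic equations (\cite{FVb}) would then force $\phi \equiv 0$, giving $u(x,y) = u_0(y)$ on all of $\R^2_+$. The global $C^2$ regularity of $u$ on $\overline{\R^2_+}$ forces $u_0$ to remain bounded on $[0,+\infty)$, and the energy conservation $\frac{1}{2}(u_0')^2 + F(u_0) \equiv F(0)$ (with $F' = f$) then produces a closed orbit of period exactly $\lambda^*$, symmetric about $\lambda^*/2$, with $u_0(0) = u_0'(0) = 0$ and $u_0' > 0$ on $(0, \lambda^*/2)$. All properties stated in (ii) follow.

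The main obstacle will be the moving plane step in the unbounded setting of case (i), where no a priori bound on $u$ is assumed and $f$ is only locally Lipschitz: classical weak-maximum-principle-in-narrow-domain machinery is not directly available, and the usual asymptotic-analysis techniques based on translation invariance require the global Lipschitz character of $f$. This is handled in Sections 2--3 by the rotating plane method combined with a careful use of the unique continuation principle, which together allow the propagation of the reflection inequality across critical thresholds without appealing to compactness.
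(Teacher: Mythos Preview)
Your overall strategy is sound and your identification of the key difficulty (starting and continuing the moving plane without global bounds on $u$ or global Lipschitz continuity of $f$) is exactly right; deferring that to the rotating-plane machinery of Sections~2--3 is appropriate.

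Two points deserve attention.

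\textbf{Case (i), strict monotonicity.} You pass from $\partial_y u \ge 0$ to $\partial_y u >0$ by applying the strong maximum principle to $w=\partial_y u$, which you claim satisfies $-\Delta w = f'(u)\,w$. But $f$ is only locally Lipschitz, so $f'$ need not exist, and differentiating the equation is not immediately justified at the classical level. One can try to rescue this weakly via Rademacher and a chain rule for Lipschitz functions, but the paper deliberately avoids this: once $u<u_\lambda$ in $\Sigma_\lambda$ (strict inequality, obtained from the rotating-plane argument and the strong maximum principle applied to the difference), Hopf's lemma applied to $u-u_\lambda$ at the symmetry line $\{y=\lambda\}$ gives directly
\[
2\,\partial_y u(x,\lambda)=\partial_y(u-u_\lambda)(x,\lambda)>0\qquad\text{for every }x\in\R,\ \lambda>0,
\]
with no appeal to $f'$. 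This is the correct mechanism in the merely-Lipschitz setting.

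\textbf{Case (ii), route and a minor slip.} Your route differs from the paper's first proof: there the authors first run the moving plane up to $\lambda^*/2$, obtain the symmetry $u\equiv u_{\lambda^*/2}$ in $\Sigma_{\lambda^*/2}$ (via a Hopf contradiction), deduce $\nabla u=0$ on $\{y=0\}$, and only then invoke unique continuation with the horizontal translates $u^t(x,y)=u(x+t,y)$ to get one-dimensionality. You bypass the moving plane entirely in (ii), comparing $u$ directly to the ODE solution $u_0$ with Cauchy data on $\{y=\lambda^*\}$; this is in fact the argument the paper uses for the $N$-dimensional Theorem~\ref{anydimension}, and it is perfectly legitimate here. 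Your derivation of symmetry and monotonicity from the ODE (rather than from moving planes) is also fine. However, your sentence ``the global $C^2$ regularity of $u$ on $\overline{\R^2_+}$ forces $u_0$ to remain bounded on $[0,+\infty)$'' is false as stated ($C^2(\overline{\R^2_+})$ does not imply boundedness) and is in any case unnecessary: periodicity follows directly from $u_0(0)=0$ (Dirichlet), $u_0'(0)=0$ (energy conservation $\tfrac12(u_0')^2+F(u_0)\equiv F(0)$ evaluated at $y=0$), and $u_0(\lambda^*)=u_0'(\lambda^*)=0$, by uniqueness for the ODE initial-value problem. The definition of $\lambda^*$ then forces the minimal period to be exactly $\lambda^*$.
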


We shall provide two different proofs of the above theorem. Note that Theorem \ref{casteorem} is an immediate consequence of Theorem \ref{T:1}. Both of them provide a complete classification of the solutions to problem \eqref{E:P} and they significantly extend the results of \cite{BCN2}, where it is always assumed that $f$ is \emph{globally Lipschitz continuous} and/or the \emph {solution $u$ is positive and bounded}, and the partial results obtained in \cite{Dancer2}, which hold for \emph {bounded solutions and $f \in C^1$. }



Next we have two results concerning the one-dimensional symmetry for solutions to \eqref{E:P}. \\

\begin{thm}\label{casteorembis}
Assume that $f$ is locally Lipschitz continuous on $[0\,, +\infty)$.
Let $u\in C^2(\overline{\mathbb{R}^2_+})$ be a nonnegative solution to \eqref{E:P} with
$|\nabla \,u|\in L^\infty(\mathbb{R}^2_+)$.
Then $u$ is one-dimensional, i.e.
\[
u(x,y)\,=\,u(y)\, \qquad \forall \, (x,y) \in \R^2_+.
\]
\end{thm}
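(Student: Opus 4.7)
The plan is to reduce Theorem~\ref{casteorembis} to situations already treated in the paper, and then to conclude $x$-independence by a rotating-directions argument.

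\emph{Reduction to the monotone case.} When $f(0)<0$, Theorem~\ref{T:1} applies directly: either $u$ is one-dimensional and periodic (in which case the conclusion is immediate), or $u>0$ in $\mathbb{R}^2_+$ with $\partial_y u>0$ there. When $f(0)\geq 0$, one writes $-\Delta u+Lu=f(u)-f(0)+Lu+f(0)\geq 0$ for a local Lipschitz constant $L$ of $f$ on the range of $u$ on strips: the strong maximum principle then gives either $u\equiv 0$ (trivially one-dimensional) or $u>0$ in $\mathbb{R}^2_+$. In the latter situation, the gradient bound together with $u(x,0)=0$ implies $u(x,y)\leq\|\nabla u\|_{L^\infty}\,y$, so $u$ is bounded on every horizontal strip $\{0<y<R\}$, where $f$ is then globally Lipschitz. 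This is enough to run a standard Berestycki--Caffarelli--Nirenberg moving-plane argument in the $y$-direction (carried out on strips, where the required \emph{a priori} bounds hold) and obtain $\partial_y u>0$ throughout $\mathbb{R}^2_+$. In all nontrivial cases we may thus assume $u>0$ and $\partial_y u>0$ in $\mathbb{R}^2_+$.

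\emph{Rotating the direction.} For $\theta\in[0,\pi/2)$ set $\nu_\theta:=(\sin\theta,\cos\theta)$; the directional derivative $D_{\nu_\theta}u=\sin\theta\,\partial_x u+\cos\theta\,\partial_y u$ coincides with $\partial_y u$ at $\theta=0$ and is therefore strictly positive for $\theta$ close to $0$. The plan is to apply the rotating-plane method developed in Sections~2--3 --- comparing $u$ with its reflections across lines orthogonal to $\nu_\theta$, and exploiting the unique continuation principle together with the gradient bound for the required compactness --- so as to preserve the inequality $D_{\nu_\theta}u\geq 0$ in $\mathbb{R}^2_+$ for every $\theta\in[0,\pi/2)$. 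Letting $\theta\nearrow\pi/2$ yields $\partial_x u\geq 0$ in $\mathbb{R}^2_+$, and the symmetric argument with $\nu_{-\theta}$ gives the reverse inequality $\partial_x u\leq 0$; hence $\partial_x u\equiv 0$ and $u(x,y)=u(y)$.

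\emph{Main obstacle.} The delicate point is keeping the rotating-plane procedure running for every $\theta\in[0,\pi/2)$: the reflected sets are unbounded tilted half-planes, and the comparison principles of Section~2 must be applied on domains where the usual boundedness hypotheses fail. It is exactly here that the assumption $|\nabla u|\in L^\infty(\mathbb{R}^2_+)$ plays its essential role: combined with the boundary condition, it supplies the uniform Lipschitz control on $u$ and its local boundedness on strips which replace the customary requirements of global boundedness of $u$ and global Lipschitz continuity of $f$, and allows the rotating-plane scheme to be pushed all the way up to the limiting horizontal direction $\theta=\pi/2$.
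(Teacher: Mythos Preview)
Your reduction to the monotone case is essentially the same as the paper's: for $f(0)<0$ the paper invokes Theorem~\ref{casteorem}, and for $f(0)\ge 0$ it quotes Theorem~1.1 of \cite{DS3} (which is what your sketch reproduces). So far so good.

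The divergence, and the real problem, is in the second step. The paper does \emph{not} obtain $x$-independence by any rotating-plane argument: once $u>0$, $\partial_y u>0$ and $|\nabla u|\in L^\infty(\mathbb{R}^2_+)$, it simply invokes Theorem~1.2 of \cite{FV}, a flattening result whose proof is based on a geometric/stability technique (a $P$-function type argument exploiting that $\partial_y u>0$ makes $u$ a stable solution), not on moving planes. Your proposed route---rotating the direction $\nu_\theta$ from vertical to horizontal and carrying a comparison across tilted lines---is a different programme, and the tools of Sections~\ref{prelimjkdfbjsk}--\ref{sjjkdkgbnnbnbnbnbn} do not supply it. Those sections work with \emph{bounded triangles} $\mathcal{T}_{x_0,s,\theta}$ delimited by $\{y=0\}$, $\{x=x_0\}$, and a nearly horizontal line $L_{x_0,s,\theta}$; the whole machinery (Proposition~\ref{maxpri}, Lemmas~\ref{smallperturbations}--\ref{largeper}, Lemma~\ref{caffbe}) relies on the Dirichlet datum on $\{y=0\}$ and on controlling $w$ along the fixed segment $\{x=x_0\}$. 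If instead you reflect across lines orthogonal to $\nu_\theta$ with $\theta$ bounded away from $0$, the comparison region is an unbounded tilted half-plane whose reflected image does not stay inside $\mathbb{R}^2_+$, the boundary decomposition is entirely different, and no analogue of Lemma~\ref{caffbe} is available to propagate the procedure. Your ``Main obstacle'' paragraph names this difficulty honestly but does not resolve it; saying that the gradient bound ``allows the rotating-plane scheme to be pushed all the way'' is an assertion, not an argument.

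In short: the reduction is fine, but the passage from monotonicity to one-dimensionality needs either the external result \cite{FV} (as the paper does) or a genuinely new argument; the triangle-based rotating technique of this paper is not that argument.
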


The above theorem recovers and improves upon a result of \cite{FV}, where only positive solutions were considered (cf. also \cite{BCN2}). \\

\begin{thm}\label{xxx5}
Assume that $f \in C^1([0\,, +\infty))$ with $f(0) <0$ such that
\[
f'(t)\geqslant c>0 
\quad\text{for any}\quad t\in (0,\infty)
\]

and let $u\in C^2(\overline{\mathbb{R}^2_+})$ be a nonnegative solution to \eqref{E:P}.

Then $u$ is one-dimensional and periodic i.e.
\[
u(x,y)\,=\,u_0(y) \qquad \forall \, (x,y) \in \R^2_+
\]
with $u_0$ as in Theorem \ref{T:1}. In particular, there are no positive solutions to \eqref{E:P}.\\
\end{thm}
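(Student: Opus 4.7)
The plan is to reduce the claim, via Theorem~\ref{T:1}, to excluding the alternative $\lambda^{*}(u)=+\infty$. If $\lambda^{*}<+\infty$, Theorem~\ref{T:1}$(ii)$ already delivers the one-dimensional periodic $u(x,y)=u_0(y)$ with $u_0$ satisfying \eqref{oneD}; in particular $u_0(0)=u_0(\lambda^{*})=0$, which rules out any positive solution in this alternative and settles the ``in particular'' clause automatically.

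Thus I would argue by contradiction, supposing $\lambda^{*}=+\infty$. Theorem~\ref{T:1}$(i)$ then yields $u>0$ and $v:=\partial_y u>0$ everywhere in $\mathbb{R}^2_+$; since $u\in C^2(\overline{\mathbb{R}^2_+})$ and $f\in C^1$, differentiating the PDE in $y$ gives classically
\[
-\Delta v \,=\, f'(u)\,v \,\ge\, c\,v \qquad \text{in } \mathbb{R}^2_+,
\]
the last inequality using $u>0$ in $\mathbb{R}^2_+$ and the hypothesis $f'\ge c$ on $(0,\infty)$.

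The key step is to exploit the unboundedness of $\mathbb{R}^2_+$ by testing against the first Dirichlet eigenfunction of $-\Delta$ on a large square. Fix $L>\pi\sqrt{2/c}$ and a closed square $R=[a,a+L]\times[b,b+L]\subset\mathbb{R}^2_+$ with $b>0$; the function $\phi(x,y)=\sin\!\bigl(\pi(x-a)/L\bigr)\sin\!\bigl(\pi(y-b)/L\bigr)$ solves $-\Delta\phi=\lambda_1(R)\phi$ with $\lambda_1(R)=2\pi^2/L^2<c$, is positive in the interior of $R$, and vanishes on $\partial R$ with strictly negative outward normal derivative on the relative interior of each side. Multiplying the differential inequality for $v$ by $\phi$, the eigenfunction equation by $v$, integrating over $R$ and using Green's second identity (with $\phi|_{\partial R}=0$), I obtain
\[
(c-\lambda_1(R))\int_R v\,\phi\,dx\,dy \,\le\, \int_{\partial R} v\,\partial_n\phi\,dS.
\]
The left-hand side is strictly positive by the choice of $L$ together with $v,\phi>0$ in the interior; the right-hand side is strictly negative because $v$ is continuous and positive on the compact set $\overline R\subset\mathbb{R}^2_+$, while $\partial_n\phi<0$ on the open sides of $\partial R$. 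This contradicts the assumption $\lambda^{*}=+\infty$ and completes the argument.

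The main obstacle is conceptual rather than technical: one must convert the pointwise hypothesis $f'\ge c$ into a global geometric obstruction, and the cleanest route is the eigenvalue comparison above---essentially the classical Allegretto--Piepenbrink/Picone principle, which asserts that a positive supersolution of $-\Delta-c$ on $\mathbb{R}^2_+$ would force $\lambda_1(\omega)\ge c$ on every bounded $\omega\subset\mathbb{R}^2_+$, contradicting $\lambda_1(B_R)\to 0$ as $R\to\infty$. All the remaining ingredients (differentiation of the PDE in $y$, the explicit sine eigenfunction, Green's identity) are classical; the delicate point is only the strict positivity of $v$ on the closed rectangle $\overline R$, which is granted directly by Theorem~\ref{T:1}$(i)$ rather than by any form of Hopf's lemma (unavailable here since $f(0)<0$ prevents $u$ from being a strict supersolution near $\{y=0\}$).
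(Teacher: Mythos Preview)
Your proof is correct and follows essentially the same approach as the paper: both reduce to excluding $\lambda^{*}=+\infty$ via Theorem~\ref{T:1}, then use the positivity of $\partial_y u$ together with $f'\ge c$ to obtain a contradiction through an eigenvalue comparison on large bounded subdomains of $\mathbb{R}^2_+$. The paper phrases this via the stability inequality $\int|\nabla\varphi|^2\ge\int f'(u)\,\varphi^2\ge c\int\varphi^2$ and the variational characterization of $\lambda_1$, whereas you test the linearized equation for $v=\partial_y u$ directly against the first Dirichlet eigenfunction on a large square---these are two equivalent implementations of the same Picone/Allegretto--Piepenbrink principle.
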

Note that Theorem \ref{Txxx5} is obtained by setting $f(u) = u-1$ in the previous theorem.\\

We can now turn to the case of the strips $\Sigma_{2b}$, $b>0$. Precisely, we consider the following problem
\begin{equation}\label{E:Pstrip}
\begin{cases}
-\Delta u=f(u), & \text{ in } \Sigma_{2b}\\
\quad u \geqslant 0, & \text{ in } \Sigma_{2b}\\
\quad u=0,\,\, &\text{ on } \partial\Sigma_{2b} \,.
\end{cases}
\end{equation}
We let $\Lambda^*$ be defined by \eqref{LAMBDA}, considering there values of $\lambda$ such that $0<\lambda<2b$, thus $\lambda^* \in (0, 2b]$.

We shall prove

\begin{thm}\label{T:3}
Assume that $f$ is locally Lipschitz continuous on $[0\,, +\infty)$ with $f(0)<0. $ Let $u\in C^2(\overline{\Sigma_{2b}})$ be a nonnegative  solution to \eqref{E:Pstrip}.  Then,

\item[$(i)$] if $\lambda^*=2b$, it follows that $u$ is positive in $\Sigma_{2b}$ with
\begin{equation}\nonumber
\partial_y u > 0\qquad \text{in}\quad \Sigma_b.
\end{equation}
Furthermore $u$ is symmetric with respect to $\{y=b\}$ i.e., $u(x,y)\,=\,u(x,2b-y)$ for any $0\leqslant y\leqslant 2b$.\\

\item[$(ii)$] If $\lambda^* < 2b$, $u$ is one-dimensional and periodic, i.e.

\[
u(x,y)\,=\,u_0(y) \qquad \forall \, (x,y) \in \Sigma_{2b}
\]

where $ u_0\in C^2(\R, [0,\infty))$ is periodic of period $\lambda^*$. Moreover, $u_0$ is the unique solution of

\begin{equation}\label{oneDstrip}
- u_0'' = f(u_0) \quad\text{in}\,\,[0,2b]\qquad
u_0(0)=u_0'(0)=0.
\end{equation}

Also, $u$ is symmetric with respect to $\{y=\frac{\lambda^*}{2}\}$ with $\partial_y u > 0$ in $\Sigma_{\frac{\lambda^*}{2}}$. Finally, $2b$ is necessarily a multiple of the period $\lambda^*$.
\end{thm}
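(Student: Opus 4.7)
The plan is to mirror the proof of Theorem~\ref{T:1}, adapting it to the strip geometry and splitting along the same dichotomy on $\lambda^*$. The workhorse is the rotating plane method developed in Sections~2--3, applied in its vertical form from the bottom face $\{y=0\}$: for every $\lambda\in\Lambda^*$ the method delivers the reflection inequality $u(x,y)\le u(x,2\lambda-y)$ in $\Sigma_\lambda$, and hence a sliding-type monotonicity $\partial_y u\ge 0$ up to the level $\lambda^*$.

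In case (i), $\lambda^*=2b$, the downward-closedness of $\Lambda^*$ ensures $b\in\Lambda^*$, so the moving plane closes exactly at the midline and gives $u(x,y)\le u(x,2b-y)$ in $\Sigma_b$. The problem is invariant under $y\mapsto 2b-y$, so the same argument performed from the top face yields the reverse inequality and hence the symmetry $u(x,y)=u(x,2b-y)$. To pass from $\partial_y u\ge 0$ to the strict $\partial_y u>0$ in $\Sigma_b$, I would differentiate the equation and apply the strong maximum principle to $-\Delta(\partial_y u)=f'(u)\,\partial_y u$ (bounded linear coefficient), ruling out the $\partial_y u\equiv 0$ alternative since it would force $u\equiv 0$, contradicting $f(0)<0$. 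Finally, strict $y$-monotonicity together with $u(\cdot,0)\equiv 0$ gives $u>0$ on $\{0<y\le b\}$, and the symmetry transports positivity to all of $\Sigma_{2b}$.

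In case (ii), $\lambda^*<2b$, the continuity argument recalled before the theorem gives $\{y=\lambda^*\}\subseteq\{u=0\}$, and since $u\ge 0$ attains its global minimum all along this line, also $\partial_y u(x,\lambda^*)\equiv 0$. For fixed $h\in\R$ the translate difference $w(x,y):=u(x+h,y)-u(x,y)$ satisfies, by Lagrange's theorem, a linear elliptic equation $-\Delta w=c_h(x,y)\,w$ with $c_h$ locally bounded, together with the zero Cauchy data $w=\partial_y w=0$ along $\{y=\lambda^*\}$. The unique continuation principle of \cite{FVb} then forces $w\equiv 0$ in the connected set $\Sigma_{2b}$, so $u$ is $x$-independent: $u(x,y)=u_0(y)$. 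The autonomous ODE $-u_0''=f(u_0)$ has conserved energy $E=\tfrac{1}{2}(u_0')^2+F(u_0)$ with $F'=f$, $F(0)=0$; evaluating at $y=\lambda^*$ gives $E\equiv 0$, hence $u_0'(0)=0$, and Cauchy--Lipschitz (with $f(0)<0$ excluding the trivial branch) identifies $u_0$ as the unique solution of \eqref{oneDstrip}. An argument analogous to Theorem~\ref{T:1}, restricted to $[0,\lambda^*]$, produces the symmetry about $\lambda^*/2$, strict monotonicity on $\Sigma_{\lambda^*/2}$, and the $\lambda^*$-periodic extension. Since the zeros of the resulting periodic $u_0$ occur exactly at the points $k\lambda^*$, the boundary condition $u_0(2b)=0$ forces $2b$ to be a positive integer multiple of $\lambda^*$.

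The main obstacle is the launching of the moving plane: with $f(0)<0$, Hopf's lemma and the standard strong maximum principle fail at the zero set of $u$, so the classical initialization of the plane method is unavailable, and this is precisely what the rotating plane machinery of Sections~2--3 is designed to bypass. A secondary delicate point is the unique continuation step in case (ii): without it one could not exclude genuinely two-dimensional solutions whose zero set happens to contain a horizontal line, and it is that rigidity which ultimately forces the one-dimensional conclusion.
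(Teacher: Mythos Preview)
Your overall strategy matches the paper's: run the rotating/moving plane machinery of Sections~\ref{prelimjkdfbjsk}--\ref{sjjkdkgbnnbnbnbnbn} in $\Sigma_{2b}$ exactly as in the proof of Theorem~\ref{T:1}, split according to whether $\lambda^*=2b$ or $\lambda^*<2b$, and in the latter case invoke unique continuation to force one-dimensionality. In case~(ii) you apply unique continuation directly from the interior line $\{y=\lambda^*\}$, whereas the paper first proves the symmetry $u\equiv u_{\lambda^*/2}$ in $\Sigma_{\lambda^*/2}$, transfers the zero Cauchy data to $\{y=0\}$, and continues from there; both routes are valid and lead to the same conclusion.

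There is, however, a genuine gap in your argument for \emph{strict} monotonicity in case~(i). You differentiate the equation and apply the strong maximum principle to $-\Delta(\partial_y u)=f'(u)\,\partial_y u$, but the hypotheses only give $f$ locally Lipschitz, so $f'(u)$ need not exist (and even if $f\in C^1$, the assumption $u\in C^2$ does not make $\partial_y u$ a classical solution of that linearized equation). The paper avoids this by working with the \emph{difference} $u-u_\lambda$: for each $\lambda<b$ the rotating plane procedure already yields the strict inequality $u<u_\lambda$ in $\Sigma_\lambda$ (strong maximum principle applied to the difference, which genuinely satisfies a linear equation with bounded zeroth-order coefficient as in \eqref{differenzatsrejkgfjk}), and Hopf's lemma at $\{y=\lambda\}$ then gives $2\partial_y u(\cdot,\lambda)=\partial_y(u-u_\lambda)(\cdot,\lambda)>0$. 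Letting $\lambda$ range over $(0,b)$ yields $\partial_y u>0$ in $\Sigma_b$. A related imprecision: the reflection inequality $u\le u_\lambda$ is produced by the moving plane set $\Lambda$ of \eqref{hhhhhhhhhh} (for $\lambda\le b$), not by membership in $\Lambda^*$; the hypothesis $\lambda^*=2b$ enters only through Lemma~\ref{caffbe}, which guarantees that the plane can be pushed all the way to $\lambda=b$.
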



The techniques used to prove Theorem \ref{T:3} also cover the case $ f(0) \ge 0$. Since the result appears to be \emph{new} even in this case, we explicitely state it in the next Theorem \ref{T:3+}.

\begin{thm}\label{T:3+}
Assume that $f$ is locally Lipschitz continuous on $[0\,, +\infty)$ with $f(0)\ge 0. $ Let $u\in C^2(\overline{\Sigma_{2b}})$ be a nonnegative  solution to \eqref{E:Pstrip}.

Then, either $u$ vanishes identically or it is positive on $\Sigma_{2b}$, symmetric about $\{y=b\}$ with  $\partial_y u >0$ in $\Sigma_{b}$.

 \end{thm}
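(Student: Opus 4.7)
The plan is to reduce Theorem \ref{T:3+} to Theorem \ref{T:3}(i) via the dichotomy ``$u\equiv 0$ or $u>0$ on $\Sigma_{2b}$''. For the dichotomy, suppose $u(p_0)=0$ for some $p_0\in\Sigma_{2b}$. Fix a bounded open neighborhood $\Omega\Subset\Sigma_{2b}$ of $p_0$ and let $L>0$ be a Lipschitz constant of $f$ on $[0,\|u\|_{L^\infty(\Omega)}]$. Because $f(0)\ge 0$ and $u\ge 0$, we have $f(u)\ge f(0)-Lu\ge -Lu$ on $\Omega$, hence $-\Delta u+Lu\ge 0$ there. The strong maximum principle applied to the nonnegative function $u$ with interior zero at $p_0$ forces $u\equiv 0$ in the connected component of $\Omega$ containing $p_0$; a standard chain-of-balls argument then propagates this to $u\equiv 0$ throughout $\Sigma_{2b}$, which is the first alternative in the statement.

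Assume instead $u>0$ in $\Sigma_{2b}$. Then for every $\mu\in(0,2b)$ the set $\{y=\mu\}\cap\{u\ne 0\}$ equals the whole line $\{y=\mu\}$, so property $(\mathcal P_\mu)$ holds and $\lambda^*=2b$ in the definition \eqref{LAMBDA}. We are thus precisely in the configuration of Theorem \ref{T:3}(i). That proof proceeds via the moving-plane method in the $y$-direction, comparing $u(x,y)$ with its reflection $u(x,2\lambda-y)$ on $\Sigma_\lambda$ for $0<\lambda\le b$: one starts with small $\lambda$ through a narrow-domain maximum principle and pushes the reflection up to $\lambda=b$ by the strong maximum principle, the Hopf lemma, and the unique continuation principle. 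These ingredients require only the local Lipschitz character of $f$ together with $u\ge 0$; the hypothesis $f(0)<0$ in Theorem \ref{T:3} is used only to exclude $u\equiv 0$ and, when $\lambda^*<2b$, to obtain the one-dimensional periodic alternative (ii). Neither role is needed here, so the proof of Theorem \ref{T:3}(i) applies verbatim and delivers the symmetry $u(x,y)=u(x,2b-y)$ together with the strict monotonicity $\partial_y u>0$ in $\Sigma_b$.

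The main obstacle is conceptual rather than technical: one must verify that the proof of Theorem \ref{T:3}(i) -- not yet presented at this point in the paper -- nowhere uses the strict negativity of $f(0)$ once positivity of $u$ has been secured. If some auxiliary construction in that proof implicitly relied on $f(0)<0$, a small adaptation would be required. However, the classical Berestycki-Caffarelli-Nirenberg moving-plane scheme for positive solutions vanishing on $\partial\Sigma_{2b}$ is well known to be insensitive to the sign of $f(0)$, so no substantive adaptation is expected and the theorem follows.
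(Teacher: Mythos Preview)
Your overall strategy coincides with the paper's: establish the dichotomy $u\equiv 0$ or $u>0$ via the strong maximum principle, and in the positive case run the moving-plane argument of Theorem~\ref{T:3}(i). Your justification of the dichotomy is correct and slightly more detailed than the paper's one-line appeal to the strong maximum principle.

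There is, however, one inaccuracy in your analysis of where $f(0)<0$ enters the proof of Theorem~\ref{T:3}(i). That proof inherits Lemma~\ref{starting} from the proof of Theorem~\ref{T:1}, and Lemma~\ref{starting} \emph{does} use $f(0)<0$: it is what gives $\partial_{yy}u(x_0,0)>0$ and hence \eqref{monteta}, which is how the rotating-plane procedure gets off the ground. So the claim that ``$f(0)<0$ is used only to exclude $u\equiv 0$ and to obtain alternative~(ii)'' is not quite right, and the proof does not apply strictly \emph{verbatim}. The paper's fix is exactly the small adaptation you anticipated: once $u>0$ in $\Sigma_{2b}$ with $u=0$ on $\partial\Sigma_{2b}$, Hopf's Lemma yields $\partial_y u(x,0)>0$ for every $x$, which directly gives \eqref{monteta2} and replaces the role of $f(0)<0$ in the starting step.

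A second, related point: you describe the starting mechanism as a ``narrow-domain maximum principle'' for small $\lambda$. In this paper that is not available, because $\Sigma_\lambda$ is unbounded in $x$ and neither $u$ is assumed bounded nor $f$ globally Lipschitz; this is precisely why the paper develops the rotating-plane technique (Lemmas~\ref{smallperturbations}--\ref{largeper}) working in bounded triangles. Your hedging paragraph correctly flags that the ``classical BCN scheme'' may not transplant directly, and indeed it does not without this refinement.
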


Note that Theorem \ref{T:3spe} is a consequence of the combination of Theorem \ref{T:3} and Theorem \ref{T:3+}.

The next result concerns the qualitative properties of nonnegative solutions on coercive epigraphs. It holds true in every dimension $N \ge 2$. Let us recall that a domain $\Omega \subset \R^N$ is a \emph{smooth coercive epigraph} if, up to a rotation of the space, there exists $g \in C^{2}(\R^{N-1},\R)$ such that $ \Omega : = \{ \, x = (x',x_N) \in \R^{N-1} \times \R \, : \, x_N > g(x') \, \}$ and $ \lim_{\vert x' \vert \to + \infty} g(x') = +\infty.$ \

\begin{thm}\label{T:Epi}
Let $N\ge 2$ and let $\Omega \subset \R^N $ denote a smooth coercive epigraph. Assume that $f$ is locally Lipschitz continuous on $[0\,, +\infty)$ with $f(0)<0. $ Let $u\in C^2(\overline{\Omega})$ be a nonnegative  solution to
\begin{equation}\label{E:Epi}
\begin{cases}
-\Delta u=f(u), & \text{ in } \Omega\\
\quad u \geqslant 0, & \text{ in } \Omega\\
\quad u=0,\,\, &\text{ on } \partial\Omega\,.
\end{cases}
\end{equation}

Then, $u$ is positive and strictly monotone increasing in the $x_N$ direction, with
\begin{equation}\nonumber
u >0, \qquad \partial_{x_N} u > 0\qquad \text{in}\quad \Omega.
\end{equation}
\end{thm}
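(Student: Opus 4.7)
The plan is to run the classical moving plane method in the direction $e_N$, exploiting the crucial geometric feature that, thanks to the coerciveness of $g$, every cap $\Sigma_\lambda := \Omega\cap\{x_N<\lambda\}$ is bounded in $\R^N$. Setting $m := \min_{\R^{N-1}} g$ and $x^\lambda := (x',2\lambda-x_N)$, the epigraph structure of $\Omega$ ensures that, for every $\lambda>m$, the reflected cap $\Sigma_\lambda^\lambda$ lies inside $\Omega$; hence $u_\lambda(x):=u(x^\lambda)$ is well defined on $\Sigma_\lambda$, and $w_\lambda := u_\lambda - u$ satisfies
\begin{equation*}
-\Delta w_\lambda = c_\lambda(x)\,w_\lambda \quad \text{in } \Sigma_\lambda,\qquad w_\lambda \geq 0 \text{ on } \partial\Sigma_\lambda,
\end{equation*}
with $c_\lambda\in L^\infty(\Sigma_\lambda)$ (using that $f$ is locally Lipschitz and that $u\in C^2(\overline{\Omega})$ is bounded on the bounded set $\Sigma_\lambda\cup\Sigma_\lambda^\lambda$). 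Note that no global boundedness of $u$ or global Lipschitz bound on $f$ is needed, since everything happens on bounded caps.

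For $\lambda$ slightly above $m$ the cap $\Sigma_\lambda$ has small Lebesgue measure, so the Berestycki--Nirenberg maximum principle for small domains gives $w_\lambda\geq 0$ in $\Sigma_\lambda$. One then sets
\begin{equation*}
\lambda^* := \sup\{\,\lambda > m \,:\, w_\mu \geq 0 \text{ in } \Sigma_\mu \text{ for every } m < \mu \leq \lambda\,\}
\end{equation*}
and the main task is to prove $\lambda^*=+\infty$. If $\lambda^*<+\infty$, continuity yields $w_{\lambda^*}\geq 0$ in $\Sigma_{\lambda^*}$, and applying the strong maximum principle componentwise to the linear equation satisfied by $w_{\lambda^*}$ produces the dichotomy: on each connected component $C$ of $\Sigma_{\lambda^*}$, either $w_{\lambda^*}>0$ strictly or $w_{\lambda^*}\equiv 0$. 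In the first case, one pushes the plane past $\lambda^*$ by the standard compact-plus-small-measure decomposition $\Sigma_{\lambda^*+\varepsilon}=K_0\cup(\Sigma_{\lambda^*+\varepsilon}\setminus K_0)$ with $K_0\Subset\Sigma_{\lambda^*}$ chosen so that $|\Sigma_{\lambda^*+\varepsilon}\setminus K_0|$ is below the small-domain MP threshold, together with continuity of $w_\lambda$ in $\lambda$ on $K_0$; this contradicts the definition of $\lambda^*$.

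The hard part, and the place where $f(0)<0$ bites, is excluding the second alternative $w_{\lambda^*}\equiv 0$ on some component $C$, since the usual Hopf-based arguments are unavailable. Here I would exploit the coerciveness together with the unique continuation principle as follows: $w_{\lambda^*}\equiv 0$ on $C$ forces $u$ to be symmetric about $T_{\lambda^*}$ on $C\cup C^{\lambda^*}$; pick a relatively open piece $U\subset\partial\Omega\cap\partial C$ (which exists because $\partial C$ meets the smooth part of $\partial\Omega$), and for every $x_0\in U$ the reflected point $x_0^{\lambda^*}$ is an interior point of $\Omega$ where the nonnegative function $u$ vanishes, hence $\nabla u(x_0^{\lambda^*})=0$; transporting this back across $T_{\lambda^*}$ by differentiating the symmetry gives $u\equiv 0$ and $|\nabla u|\equiv 0$ on $U$, so the unique continuation principle (e.g.\ via reflection across the smooth boundary, as in \cite{FVb}) forces $u\equiv 0$ in $\Omega$, contradicting $-\Delta u=f(u)$ with $f(0)<0$. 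Once $\lambda^*=+\infty$, the comparison $u\le u_\lambda$ in every $\Sigma_\lambda$ gives $u(x',y_1)\leq u(x',y_2)$ for $g(x')<y_1<y_2$ by choosing $\lambda=(y_1+y_2)/2$; applying the strong maximum principle to $\partial_{x_N}u$ (which satisfies a linear elliptic equation by differentiating $-\Delta u=f(u)$ and bounding the quotient $(f(u(x+he_N))-f(u(x)))/h$) upgrades this to $\partial_{x_N}u>0$, and the same unique-continuation argument rules out an interior zero of $u$, yielding $u>0$ in $\Omega$.
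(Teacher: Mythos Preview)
Your overall moving-plane scheme coincides with the paper's: bounded caps from coerciveness, the small-domain maximum principle to start, and the compact-plus-small-measure decomposition to push past a putative finite $\lambda^*$. The place where your argument breaks down is the exclusion of the alternative $w_{\lambda^*}\equiv 0$ on a component $C$.

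From $w_{\lambda^*}\equiv 0$ on $C$ you correctly deduce that $u=0$ and $\nabla u=0$ on a relatively open piece $U\subset\partial\Omega$ (equivalently, on the reflected interior hypersurface $U^{\lambda^*}$). But the step ``unique continuation forces $u\equiv 0$ in $\Omega$'' is unjustified: unique continuation lets you identify two \emph{solutions} of $-\Delta v=f(v)$ that share the same Cauchy data on a hypersurface, and the candidate $v\equiv 0$ is \emph{not} a solution when $f(0)<0$, so there is nothing to compare $u$ with. Equivalently, the odd reflection of $u$ across $U$ does not satisfy an inequality of the form $\vert\Delta \tilde u\vert\le C\vert\tilde u\vert$ near the zero set (the constant term $f(0)$ survives), so Aronszajn-type unique continuation does not apply. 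In fact the local Cauchy problem $-\Delta u=f(u)$, $u=\partial_\nu u=0$ on $U$, has a nontrivial local profile (already $\partial_{\nu\nu}u=-f(0)>0$ on $U$), and that, not zero, is what $u$ matches. The paper closes this gap by a different and more elementary device: since $f(0)<0$, a second-derivative argument at the boundary (Lemma~4.1 in \cite{BCN1}) shows that for every $x\in\partial\Omega$ there is $\rho(x)>0$ with $u>0$ in $\Omega\cap B_{\rho(x)}(x)$. The symmetry $u\equiv u_{\lambda^*}$ on $C$ forces $u=0$ on the interior hypersurface $R_{\lambda^*}(\partial\Omega\cap\overline{C})$, and by the epigraph geometry points of this reflected set lie arbitrarily close to $\partial\Omega$ (take boundary points $(x',g(x'))$ with $g(x')\nearrow\lambda^*$). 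This directly contradicts positivity near the boundary, with no unique continuation needed.

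Two smaller points. You obtain $\partial_{x_N}u>0$ by ``differentiating $-\Delta u=f(u)$'' to get a linear equation for $\partial_{x_N}u$; this is not available since $f$ is merely locally Lipschitz. The paper instead applies the strong maximum principle to $w_\lambda$ (not to $\partial_{x_N}u$) to get $u<u_\lambda$ in every $\Omega_\lambda$, and then the Hopf lemma on the moving hyperplane yields $\partial_{x_N}u(x',\lambda)>0$ for all $\lambda$. And once $\partial_{x_N}u>0$ is known, positivity of $u$ follows immediately by integrating along the vertical segment from $(x',g(x'))$; no unique-continuation argument is needed there either.
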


Previous results in this case have been obtained in \cite{BCN3,EL} under the condition that $u$ \emph{is a positive solution} (cf. also \cite{Fa2}).

Further results concerning the higher dimensional case are provided in the last section.

In this work we focused on the case $f(0)<0$, where very few results were available. For the more classical case $f(0) \ge 0$, $f$ globally Lipschitz continuous and/or $u$ bounded, we refer to \cite{BCN1,BCN2,BCN3,BCN5,CLZ,CMS,Dancer,EL,Fa,Fa2,FSV,FV,GiSp} and the references therein.

The paper is organized as follows: in Section \ref{prelimjkdfbjsk} we state and prove some preliminary results needed for the application of 
the rotating plane technique. In Section \ref{sjjkdkgbnnbnbnbnbn} we give the first proof of Theorem~\ref{T:1}. Here we exploit the unique continuation principle only to prove the last assertion of the statement. We provide the second proof of Theorem~\ref{T:1} in Section \ref{kvbvvcvcvcvcvcvv} exploiting there the unique continuation principle to start the rotating plane procedure. Section \ref{fdfdfdfdfdffdfdfdfdfd} is devoted to the proof of Theorems \ref{casteorem}-\ref{T:3spe} and Theorems \ref{casteorembis}-\ref{T:3+}. The results in higher dimensions are treated in Section \ref{higer}.

\section{Preliminary results}\label{prelimjkdfbjsk}


In this section we assume that $f$ is \emph {merely locally Lipschitz continous}. No restrictions are imposed on the sign of $f(0)$.

In the proof of our main result we will exploit 
a rotating plane technique. This will be strongly based on the use of weak and strong maximum principles, see e.g. \cite{GT,PSB}.
Since we are not assuming that the solution is globally bounded and since we are not assuming that the nonlinearity $f$ is globally Lipschitz continuous, we need the following version of the weak comparison principle in domains of small measure.

\begin{prop}[Weak Comparison Principle in small domains]\label{maxpri}
Assume $ N\ge 2$. Let us consider a bounded domain $D\subset\R^N$ and $u,v\in C^2(\overline D)$ such that
\begin{equation}\label{eqdifferenza}
-\Delta u-f(u)\leq -\Delta v-f(v)\,\qquad\text{in}\quad D\,.
\end{equation}
Then there exists $\vartheta\,=\,\vartheta(D,u,v,f)>0$ such that, for any domain $D'\subset D$
with $u\leq v$ on $\partial\,D'$
and $\mathcal L (D')\leq \vartheta$, it follows
\[
u\leq v\qquad\text{in}\,\,D'\,.
\]
\end{prop}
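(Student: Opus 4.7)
The plan is to linearize the differential inequality, use $(u-v)^+$ as a test function, and then close the estimate by a measure-dependent Poincaré/Sobolev inequality that can be absorbed into the left-hand side provided $\mathcal{L}(D')$ is sufficiently small.

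First I would rewrite \eqref{eqdifferenza} as $-\Delta(u-v)\leq f(u)-f(v)$ in $D$. Since $u,v\in C^2(\overline D)$ and $D$ is bounded, both functions are bounded on $\overline D$; set $M:=\|u\|_{L^\infty(D)}+\|v\|_{L^\infty(D)}$ and let $L=L(M,f)$ denote the Lipschitz constant of $f$ on the compact interval $[-M,M]$. This already encodes the fact that the \emph{local} Lipschitz assumption on $f$ is enough: the constant $L$ depends on $u,v$ (through $M$) but not on the particular subdomain $D'$.

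Next, fix any subdomain $D'\subset D$ with $u\leq v$ on $\partial D'$ and take $w:=(u-v)^+$. Then $w\in H^1_0(D')$ (extending by zero outside $\{u>v\}\cap D'$). Multiplying the differential inequality by $w$ and integrating by parts over $D'$ gives
\[
\int_{D'}|\nabla w|^2\dx\;\leq\;\int_{D'}\bigl(f(u)-f(v)\bigr)w\dx\;\leq\;L\int_{D'}w^2\dx.
\]
Now I would invoke the Poincaré--Sobolev inequality in the form $\|w\|_{L^2(D')}\leq C_N\,\mathcal{L}(D')^{1/N}\|\nabla w\|_{L^2(D')}$ (which follows from Sobolev embedding $H^1_0\hookrightarrow L^{2^*}$ combined with H\"older, and whose constant $C_N$ depends only on the dimension). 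Plugging this in,
\[
\int_{D'}|\nabla w|^2\dx\;\leq\;L\,C_N^{\,2}\,\mathcal{L}(D')^{2/N}\int_{D'}|\nabla w|^2\dx.
\]

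Finally, choose $\vartheta=\vartheta(D,u,v,f):=(L\,C_N^{\,2})^{-N/2}$. For any $D'\subset D$ with $\mathcal{L}(D')\leq \vartheta$, the coefficient $L C_N^{\,2}\mathcal{L}(D')^{2/N}$ is at most $1$, so necessarily $\|\nabla w\|_{L^2(D')}=0$, whence $w\equiv 0$ in $D'$, i.e.\ $u\leq v$ in $D'$. The only step requiring care is the justification that $w\in H^1_0(D')$ even though $D'$ is not assumed smooth: this is standard since $u-v\leq 0$ on $\partial D'$ and $u,v\in C^2(\overline D)$, so $(u-v)^+$ vanishes on $\partial D'$ and extension by zero yields an $H^1_0$ function. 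No serious obstacle is expected; the main point is simply recognizing that local Lipschitzness of $f$ suffices because $u,v$ are a priori bounded on $\overline D$.
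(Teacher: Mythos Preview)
Your proof is correct and follows essentially the same route as the paper's: test the linearized inequality with $(u-v)^+\in H^1_0(D')$, bound the reaction term via the local Lipschitz constant of $f$ (finite because $u,v$ are bounded on $\overline D$), and absorb via the measure-dependent Poincar\'e inequality. The only slip is that your choice $\vartheta=(LC_N^{\,2})^{-N/2}$ makes the coefficient at most $1$ rather than strictly less than $1$; take any strictly smaller $\vartheta$ (as the paper does) so that the inequality forces $\nabla w\equiv 0$.
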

\begin{proof}
We use $(u-v)^+\in H^1_0(D')$ as test function in the weak formulation of \eqref{eqdifferenza} and get
\begin{equation}\nonumber
\begin{split}
\int_{D'} \big|\nabla (u-v)^+\big|^2\,dx&\leq \int_{D'} \,\frac{f(u)-f(v)}{(u-v)}((u-v)^+)^2\,dx\\
&\leqslant C(D,u,v,f)\int_{D'} \,((u-v)^+)^2\,dx
\end{split}
\end{equation}

where the positive constant $C(D,u,v,f)$ can be determined exploiting the fact that $u,v$ are bounded on $ {\overline D}$ and $f$ is locally Lipschitz continuous on $[0\,, +\infty)$.

An application of Poincar\'e inequality gives

\begin{equation}\nonumber
\int_{D'} \big|\nabla (u-v)^+\big|^2\,dx \leqslant C(D,u,v,f)(C_N (\mathcal L (D'))^{\frac{2}{N}}) \int_{D'} \big|\nabla (u-v)^+\big|^2\,dx\,,
\end{equation}

where $C_N>0$ is a constant depending only on the euclidean dimension $N$.

\noindent For $\mathcal L (D')$ small such that
$ C(D,u,v,f)(C_N (\mathcal L (D'))^{\frac{2}{N}})<1$  we get that $(u-v)^+\equiv 0$ and the thesis.
\end{proof}

Now we focus on the two-dimensional case and fix some notations.
Given $x_0 \in \R$, $s>0$ and $\theta\in(0\,,\,\frac{\pi}{2})$, let $L_{x_0,s,\theta}$ be the line, with slope $\tan(\theta)$, passing through $(x_0,s)$. Also, let
$V_\theta$ be the vector orthogonal to $L_{x_0,s,\theta}$ such that $(V_\theta,e_2) >0$ and $\|V_\theta\|=1$.

We denote by $$\mathcal{T}_{x_0,s,\theta}$$ the (open) triangle delimited by $L_{x_0,s,\theta}$, $\{y=0\}$ and $\{x=x_0\}$.
We also define
$$
u_{x_0,s,\theta}(x)=u(T_{x_0,s,\theta}(x)), \quad x \in \mathcal{T}_{x_0,s,\theta}
$$
where $T_{x_0,s,\theta}(x)$ is the point symmetric to $x$, w.r.t. $L_{x_0,s,\theta}$, and
\begin{equation}\label{fskhkhgfjhgjhfj}
w_{x_0,s,\theta}=u-u_{x_0,s,\theta}\,.
\end{equation}

It is immediate to see that
$u_{x_0,s,\theta}$ still fulfills $-\Delta u_{x_0,s,\theta}=f(u_{x_0,s,\theta})$ and
\begin{equation}\label{differenzatsrejkgfjk}
-\Delta  w_{x_0,s,\theta}= c_{x_0,s,\theta} w_{x_0,s,\theta}
\end{equation}

on the triangle $\mathcal{T}_{x_0,s,\theta}$, where we have set

\begin{equation}\label{differenzatsrejkgfjk}
c_{x_0,s,\theta}(x) : = \begin{cases}
\frac{f(u(x))-f(u_{x_0,s,\theta}(x))}{u(x)-u_{x_0,s,\theta}(x)}  & \text{ if }  w_{x_0,s,\theta} \neq 0 \\
\quad 0 & \text{ if }  w_{x_0,s,\theta} = 0.
\end{cases}
\end{equation}

Note that $\vert c_{x_0,s,\theta} \vert \le C(\mathcal{T}_{x_0,s,\theta},u,f)$ on the triangle $\mathcal{T}_{x_0,s,\theta}$, where  $C(\mathcal{T}_{x_0,s,\theta},u,f)$ is a positive constant which can be determined by exploiting the fact that $u$ and $u_{x_0,s,\theta}$ are bounded on $\overline {\mathcal{T}_{x_0,s,\theta}}$ and $f$ is locally Lipschitz continuous on $[0\,, +\infty)$.



In what follows we shall make repeated use of a refined version of the \emph{moving plane technique} \cite{S} (see also \cite{BN,GNN}). Actually we will exploit a
\emph{rotating plane technique} and a \emph{sliding plane technique}  developed in \cite{DS3}.

Let us give the following definition

\begin{definition}\label{defcondition} Given $x_0, s$ and $\theta $ as above, we say that the condition  $(\mathcal H\mathcal T_{x_0,s,\theta})$ holds in the triangle $\mathcal T_{x_0,s,\theta}$ if
\begin{center}

$\quad w_{x_0,s,\theta}< 0 \quad $ in $\quad \mathcal{T}_{x_0,s,\theta}$,

$w_{x_0,s,\theta}\leqslant 0 \quad $ on $ \quad \partial(\mathcal{T}_{x_0,s,\theta}) \quad $ and

$ w_{x_0,s,\theta}$ is not identically zero on $\partial(\mathcal{T}_{x_0,s,\theta})$,
\end{center}

with $w_{x_0,s,\theta}$ defined in \eqref{fskhkhgfjhgjhfj}.
\end{definition}

We have the following

\begin{lem}[Small Perturbations]\label{smallperturbations}
Let  $(x_0,s,\theta)$  and $\mathcal{T}_{x_0,s,\theta}$ be  as above and assume that
$(\mathcal H\mathcal T_{x_0,s,\theta})$ holds. Then there exists $\bar\mu = \bar\mu(x_0, s, \theta)>0$ such that
\begin{equation}\label{claimsmallpert}
\begin{cases}
\vert \theta - \theta' \vert + \vert s-s'\vert < \bar\mu, &\\
w_{x_0,s',\theta'}\leqslant 0 \quad \text {on} \,\, \partial(\mathcal{T}_{x_0,s',\theta'}), \hskip4truecm \Longrightarrow \quad (\mathcal H\mathcal T_{x_0,s',\theta'}) \,\, \text {holds}.&\\
w_{x_0,s',\theta'} \quad \text {is not identically zero on} \,\, \partial(\mathcal{T}_{x_0,s',\theta'}) &\\
\end{cases}
\end{equation}
\end{lem}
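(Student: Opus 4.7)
The plan is to reduce the assertion to a single application of the Weak Comparison Principle in small domains (Proposition~\ref{maxpri}) on the thin strip of the perturbed triangle that sits outside a suitably chosen compact subset of the reference triangle, and then to upgrade the resulting non-strict inequality to a strict one via the Strong Maximum Principle.

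First I would fix a bounded open set $D\subset \R^2$ containing $\overline{\mathcal{T}_{x_0,s',\theta'}}$ and its reflection across $L_{x_0,s',\theta'}$ for every $(s',\theta')$ in a small neighborhood of $(s,\theta)$, so that the threshold $\vartheta=\vartheta(D,u,f)>0$ given by Proposition~\ref{maxpri} can be chosen independently of the perturbation parameters (it depends only on $\|u\|_{L^\infty(D)}$ and the Lipschitz constant of $f$ on the corresponding range). Using the strict inequality $w_{x_0,s,\theta}<0$ in the open triangle $\mathcal{T}_{x_0,s,\theta}$, I would then select a compact $K\subset \mathcal{T}_{x_0,s,\theta}$ on which $w_{x_0,s,\theta}\le -\delta$ for some $\delta>0$, chosen so that $\mathcal{L}(\mathcal{T}_{x_0,s,\theta}\setminus K)<\vartheta/2$.

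Next, the continuous dependence of the reflection map $T_{x_0,\cdot,\cdot}(x)$ on $(s',\theta')$, combined with the $C^2$ regularity of $u$, yields uniform convergence of $w_{x_0,s',\theta'}$ to $w_{x_0,s,\theta}$ on $K$. Hence, shrinking $\bar\mu$ if needed, one has $K\subset\mathcal{T}_{x_0,s',\theta'}$, $w_{x_0,s',\theta'}\le -\delta/2<0$ on $K$, and $\mathcal{L}(\mathcal{T}_{x_0,s',\theta'}\setminus K)<\vartheta$. Applying Proposition~\ref{maxpri} on $D':=\mathcal{T}_{x_0,s',\theta'}\setminus K$, with $u$ and $u_{x_0,s',\theta'}$ in the roles of the two solutions (so \eqref{eqdifferenza} holds with equality), and combining the boundary hypothesis of the lemma along $\partial\mathcal{T}_{x_0,s',\theta'}$ with the strict negativity just established on $\partial K$, one obtains $w_{x_0,s',\theta'}\le 0$ throughout $D'$, and therefore throughout $\mathcal{T}_{x_0,s',\theta'}$.

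Finally, since $w:=w_{x_0,s',\theta'}$ satisfies a linear equation of the form $-\Delta w = c(x)\, w$ with bounded coefficient $c$, and since $w\le 0$ is now known on the whole open triangle, the Strong Maximum Principle applied to the subsolution inequality $-\Delta w + c^- w \le 0$ forces either $w<0$ in the interior or $w\equiv 0$; the latter contradicts the standing assumption that $w_{x_0,s',\theta'}$ is not identically zero on $\partial\mathcal{T}_{x_0,s',\theta'}$, so $(\mathcal{H}\mathcal{T}_{x_0,s',\theta'})$ holds. The main obstacle is to guarantee uniformity of the threshold $\vartheta$ and of the relevant $L^\infty$ bounds across the parameter neighborhood; this is precisely what forces the initial choice of a fixed enclosing domain $D$ and the exploitation of the locally Lipschitz character of $f$ on the bounded range of values taken by $u$ on $D$.
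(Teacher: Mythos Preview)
Your argument is correct and follows essentially the same route as the paper: fix an enclosing domain $D$ to make the threshold $\vartheta$ from Proposition~\ref{maxpri} uniform, carve out a compact $K\subset\mathcal{T}_{x_0,s,\theta}$ on which $w_{x_0,s,\theta}$ is strictly negative, use continuity of the reflection to keep $w_{x_0,s',\theta'}<0$ on $K$ and $\mathcal{L}(\mathcal{T}_{x_0,s',\theta'}\setminus K)$ small, apply the weak comparison principle on the residual set, and upgrade via the strong maximum principle. The only cosmetic difference is that the paper first shrinks to an inner triangle $\mathcal{T}_{x_0,s-\epsilon,\theta+\epsilon}$ before choosing $K$, whereas you rely directly on the fact that a compact subset of the open triangle remains inside nearby triangles; both are equivalent.
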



\begin{proof}
In order to exploit Proposition \ref{maxpri} let us fix
a bounded domain $D\subset {\mathbb{R}^2_+}$ such that $\mathcal{T}_{x_0,s,\theta}\subset D$ for all $ s' \in (0,s+1) $ and $ \theta' \in (\frac{\theta}{2}, \frac{\pi}{2})$. Now pick a small $\epsilon = \epsilon(\theta,s)>0$ such that
$ \mathcal{L} (\mathcal{T}_{x_0,s+\epsilon,\theta- \epsilon} \setminus \mathcal{T}_{x_0,s-\epsilon,\theta+ \epsilon}) < \frac{\vartheta}{10}$ and then a compact set $K\subset \mathcal{T}_{x_0,s-\epsilon,\theta+ \epsilon}$ such that $ \mathcal{L} (\mathcal{T}_{x_0,s-\epsilon,\theta+ \epsilon} \setminus K) < \frac{\vartheta}{10}$,
where $\vartheta$ is given by Proposition \ref{maxpri}. Therefore,
for all $(s', \theta')$ satisfying $ \vert \theta - \theta' \vert + \vert s-s'\vert < \epsilon,$ we have $ \mathcal{L} (\mathcal{T}_{x_0,s',\theta'} \setminus K) < \frac{\vartheta}{5}$. Also, since by assumption $w_{x_0,s,\theta}< 0$ in $\mathcal{T}_{x_0,s,\theta}$  we get $w_{x_0,s,\theta} \leqslant\rho < 0$ on the compact set $K$.  Therefore, we can find $ \bar\mu \in (0, \epsilon)$ such that, for all $(s', \theta')$ satisfying $ \vert \theta - \theta' \vert + \vert s-s'\vert < \bar\mu,$ we have $w_{x_0,s',\theta'} \leqslant \frac{\rho}{2} < 0$ on the compact set $K$ and
$ \mathcal{L} (\mathcal{T}_{x_0,s',\theta'} \setminus K) < \frac{\vartheta}{5}$.
Since $w_{x_0,s',\theta'}\leqslant  0$ on $\partial \big(\mathcal{T}_{x_0,s',\theta'}\setminus K\big)$, we can apply Proposition \ref{maxpri} to get that
\begin{center}
$w_{x_0,s',\theta'}\leqslant  0 \quad $  in $\quad \mathcal{T}_{x_0,s',\theta'}\setminus K$
\end{center}
and therefore in the triangle $\mathcal{T}_{x_0,s',\theta'}$. Also by the strong comparison principle, we  get
 $$w_{x_0,s',\theta'}<  0\qquad \text{in}\quad\mathcal{T}_{x_0,s',\theta'}$$
  and the proof is completed.
\end{proof}


Let us now show that, from the fact that we can make small translations and rotations of  $\mathcal{T}_{x_0,s,\theta}$ towards $\mathcal{T}_{x_0,s',\theta'}$,
if $(s',\theta ')\approx (s,\theta)$, then \emph{we can also make larger translations and rotations}. We have the following

\begin{lem}[The sliding-rotating technique]\label{largeper}
Let $(x_0,s,\theta )$ be as above ad assume that $(\mathcal H\mathcal T_{x_0,s,\theta})$ holds.
 Let $(\hat{s},\hat{\theta})$ be fixed 
and assume that there exists a continuous function $g(t)=(s(t),\theta (t))\, :\,[0\,,\,1] \rightarrow (0, +\infty) \times (0\,,\,\frac{\pi}{2})$, such that
$g(0)=(s,\theta)$ and $g(1)=(\hat{s},\hat{\theta})$.
Assume that
\begin{equation}\label{tttt}
 w_{x_0,s(t),\theta(t)}\leqslant 0\qquad \text{on}\quad\partial(\mathcal{T}_{x_0,s(t),\theta(t)})\qquad
 \text{for every} \quad t\in [0,1)
 \end{equation}

and that
$w_{x_0,s(t),\theta(t)}$ is not identically zero on $\partial(\mathcal{T}_{x_0,s(t),\theta(t)})$  for every $t\in [0,1)$.\\

Then
\begin{center}
$(\mathcal H\mathcal T_{x_0,\hat s,\hat \theta})$ holds.
\end{center}
\end{lem}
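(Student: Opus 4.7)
The plan is to run a connectedness argument in $t\in[0,1]$. Set
\[
A:=\bigl\{\,t\in[0,1]\ :\ (\mathcal H\mathcal T_{x_0,s(\tau),\theta(\tau)})\ \text{holds for every } \tau\in[0,t]\,\bigr\},
\]
so that $0\in A$ by assumption, and the goal is $1\in A$, which yields in particular $(\mathcal H\mathcal T_{x_0,\hat s,\hat\theta})$. Write $\bar t:=\sup A$.

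First I would prove that $A\cap[0,1)$ is both relatively open and relatively closed, whence $[0,1)\subset A$. For openness, given $t_0\in A$ with $t_0<1$, Lemma~\ref{smallperturbations} furnishes $\bar\mu=\bar\mu(x_0,s(t_0),\theta(t_0))>0$ such that any nearby parameter $(s',\theta')$ within distance $\bar\mu$ for which $w_{x_0,s',\theta'}$ satisfies the two boundary conditions automatically enjoys $(\mathcal H\mathcal T_{x_0,s',\theta'})$. Continuity of $g$ places $(s(t),\theta(t))$ within $\bar\mu$ of $(s(t_0),\theta(t_0))$ on a whole neighborhood of $t_0$, and hypothesis \eqref{tttt} together with the non-triviality assumption supplies the required boundary conditions whenever $t<1$.

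For closedness, take $t_n\in A$ with $t_n\to t_\infty<1$. Since $u\in C^2(\overline{\mathbb R^2_+})$ and the reflection $T_{x_0,s,\theta}$ depends smoothly on $(s,\theta)$, $w_{x_0,s(t_n),\theta(t_n)}$ converges uniformly on compact subsets of $\overline{\mathcal T_{x_0,s(t_\infty),\theta(t_\infty)}}$ to $w_{x_0,s(t_\infty),\theta(t_\infty)}$. Letting $n\to\infty$ in $w_{x_0,s(t_n),\theta(t_n)}<0$ yields $w_{x_0,s(t_\infty),\theta(t_\infty)}\le 0$ in the triangle, while \eqref{tttt} directly provides the two boundary conditions at $t_\infty$. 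Now $w_{x_0,s(t_\infty),\theta(t_\infty)}$ satisfies $-\Delta w=c\,w$ with $c\in L^\infty$ (see \eqref{differenzatsrejkgfjk}); rewriting as $-\Delta w+c^-w=c^+w\le 0$ on $\{w\le 0\}$ and invoking the strong maximum principle gives the dichotomy that either $w\equiv 0$ in the closed triangle or $w<0$ in its interior, and the non-triviality on $\partial\mathcal T_{x_0,s(t_\infty),\theta(t_\infty)}$ rules out the first case. Hence $t_\infty\in A$.

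Finally I would verify $(\mathcal H\mathcal T_{x_0,\hat s,\hat\theta})$ itself. The same convergence applied as $t\to 1^-$ gives $w_{x_0,\hat s,\hat\theta}\le 0$ on $\overline{\mathcal T_{x_0,\hat s,\hat\theta}}$. The hard point here is that \eqref{tttt} is only assumed on $[0,1)$, so the ``not identically zero on $\partial\mathcal T$'' clause at the endpoint is not literally inherited from the hypothesis: I would obtain it by coupling the strong maximum principle with a Hopf-type argument propagated uniformly in $t$ close to $1$, so that the one-sided outward normal derivative bound, produced at each $t<1$ by strict negativity in the interior and non-triviality on the boundary, survives the limit and excludes the collapse $w_{x_0,\hat s,\hat\theta}\equiv 0$ on $\partial\mathcal T_{x_0,\hat s,\hat\theta}$. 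Once this is ruled out, the strong maximum principle applied to $-\Delta w=c\,w$ on $\mathcal T_{x_0,\hat s,\hat\theta}$ yields $w_{x_0,\hat s,\hat\theta}<0$ in the interior, completing the three clauses of $(\mathcal H\mathcal T_{x_0,\hat s,\hat\theta})$.
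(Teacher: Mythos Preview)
Your connectedness argument on $[0,1)$ is exactly the paper's: both define $\bar t=\sup A$ (the paper writes $\overline T$ for your $A$), obtain openness from Lemma~\ref{smallperturbations}, and obtain closedness from continuity of $w$ in the parameters followed by the strong maximum principle applied to $-\Delta w=c\,w$. So up to $t<1$ there is no difference.

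The genuine gap is at $t=1$. You rightly note that the two boundary hypotheses are only imposed on $[0,1)$, so the clause ``$w_{x_0,\hat s,\hat\theta}\not\equiv 0$ on $\partial\mathcal T_{x_0,\hat s,\hat\theta}$'' is not inherited. Your proposed remedy---a Hopf normal-derivative bound carried uniformly in $t$ to the limit---does not work as written: the quantitative Hopf estimate at each $t<1$ is controlled, through the standard barrier, by $\inf_K(-w_{x_0,s(t),\theta(t)})$ on a fixed interior compact $K$, and nothing in the hypotheses prevents this infimum from tending to $0$ as $t\to1^-$. Worse, even if a strictly positive outward normal derivative of $w_{x_0,\hat s,\hat\theta}$ survived at some boundary point, that would only exclude $w_{x_0,\hat s,\hat\theta}\equiv 0$ on the \emph{closed} triangle; it would not rule out the scenario $w_{x_0,\hat s,\hat\theta}<0$ in the interior with $w_{x_0,\hat s,\hat\theta}\equiv 0$ on $\partial\mathcal T_{x_0,\hat s,\hat\theta}$, which also violates $(\mathcal H\mathcal T_{x_0,\hat s,\hat\theta})$. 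The paper's own proof is equally laconic here---it stops at $\bar t=1$---but in every place the lemma is actually invoked (Lemma~\ref{starting} and the end of the first proof of Theorem~\ref{T:1}) the boundary conditions are established on all of $[0,1]$, via the properties listed after \eqref{mons} and via Lemma~\ref{caffbe}. The clean repair is to state the hypotheses on $[0,1]$; your closedness argument then applies verbatim at $t_\infty=1$ and yields $(\mathcal H\mathcal T_{x_0,\hat s,\hat\theta})$ directly, with no Hopf-type limiting needed.
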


\begin{proof}
By the assumptions and exploiting Lemma \ref{smallperturbations} we  obtain the existence of  $\tilde{t}>0$ small such that, for  $0\leqslant t\leqslant \tilde{t}$, $(\mathcal H\mathcal T_{x_0,s(t),\theta(t)})$ holds.


We now set

\begin{center}
$\overline{T}\equiv \,\, \{\tilde{t}\in [0,1]\,\, s.t.\, (\mathcal H\mathcal T_{x_0,s(t),\theta(t)})\,\,\text{  holds for any}\quad 0 \leqslant t\leqslant\tilde{t}\}$
\end{center}
 and $$\bar{t}=\sup\,\overline{T}\,.$$
We claim that actually $\bar{t}=1$. To prove this, assume  $\bar{t}<1$ and note that in this case we have
$$
w_{x_0,s(\bar{t}),\theta(\bar{t})}\le  0 \quad \text{in}\quad  \mathcal{T}_{x_0,s(\bar{t}),\theta(\bar{t})}\,
$$
$$
w_{x_0,s(\bar{t}),\theta(\bar{t})} \le 0 \quad on \quad \partial(\mathcal{T}_{x_0,s(\bar{t}),\theta(\bar{t})})
$$

by continuity, and that $w_{x_0,s(\bar{t}),\theta(\bar{t})}$ is not identically zero on $\partial(\mathcal{T}_{x_0,s(\bar{t}),\theta(\bar{t})})$  by assumption.

Hence, by the strong maximum principle, we see that
$$
w_{x_0,s(\bar{t}),\theta(\bar{t})}< 0 \quad \text{in}\quad  \mathcal{T}_{x_0,s(\bar{t}),\theta(\bar{t})}\,.
$$

Therefore $(\mathcal H\mathcal T_{x_0,s(\bar{t}),\theta(\bar{t})})$ holds and using once again Lemma \ref{smallperturbations}, we can find a sufficiently small $\varepsilon >0$ so that  $(\mathcal H\mathcal T_{x_0, s(t),\theta(t)})$ holds for any
$0\leqslant t \leqslant \bar{t}+\varepsilon$, which contradicts the definition of $\bar{t}$.
\end{proof}

\section{First Proof of Theorem~\ref{T:1}}\label{sjjkdkgbnnbnbnbnbn}

Given \emph {any} $x_0 \in \R$, let us set
\begin{equation}\label{QH}
Q_{h}(x_0)=\{(x,y)\,:\, |x-x_0| \leqslant h, 0 \leqslant y \leqslant 2h\}\,.
\end{equation}
Since $\partial_{xx}u(x_0,0) = 0$, we have that
\begin{equation}\nonumber
-\partial_{yy}u(x_0,0)\,=\,-\Delta u (x_0,0)=f(u(x_0,0))=f(0)<0\,,
\end{equation}
by $(f_2)$. Recalling that $u\in C^2(\overline{\mathbb{R}^2_+})$, we conclude that we can take $\bar h>0$ small such that
\begin{equation}\nonumber
 \partial_{yy}u>0\qquad \text{in }\quad Q_{\bar h}(x_0).
\end{equation}

Exploiting again the fact that $u\in C^2(\overline{\mathbb{R}^2_+})$, we  can consequently find $\bar \theta=\bar \theta(\bar h) \in (0, \frac{\pi}{2}) $ such that
\begin{equation}\label{monteta}
\frac{\partial}{\partial V_\theta}  \left(\frac{\partial\,\,u}{\partial V_\theta} \right)>0\qquad \text{in }\quad Q_{\bar h}(x_0)\qquad\text{for}\,\,\,\, - \bar\theta \leq\theta\leq\bar\theta\,.
\end{equation}

Also, since we assumed that $u$ is nonnegative in $\mathbb{R}^2_+$, it follows that
\begin{equation}\label{sbvdvjkdbvdvbjkd}
\frac{\partial\,\,u}{\partial V_\theta}(x,0) \geqslant 0\qquad\qquad\text{for any}\,\,\,\, -\bar \theta \leq\theta\leq\bar\theta
 \quad \text{and for any}\,\,\,x\in\R\,.
\end{equation}

By combining \eqref{monteta} and \eqref{sbvdvjkdbvdvbjkd}, we deduce the strict monotonicity of $u$ in the $V_\theta$-direction, for every $ x \in Q_{\bar h}(x_0) \cap \R^2_+$ and every
$ \theta \in [-\bar\theta , \bar \theta] $.


>From the above analysis, we find the existence of (possible very small)

\begin{equation}\label{mons}
\bar s\,=\,\bar s(\bar \theta)>0\,,
\end{equation}
such that, for any $0< s\leqslant \bar{s}$ :

\begin{itemize}
 \item[$i)$] both the triangle $\mathcal{T}_{x_0,s,\bar\theta}$ and its reflection w.r.t. $L_{x_0,s,\bar\theta}$ are contained in $Q_{\bar h}(x_0)$ (as well as their reflections w.r.t. the axis $ \{\, x = x_0 \, \}$),
\item[$ii)$] both the segment $\{\, (x_0,y) \, : \,  0 \le y \le s  \, \}$ and its reflection w.r.t. $L_{x_0,s,\theta}$ are contained in $Q_{\bar h}(x_0)$ for every $\theta \in (0, \bar\theta]$,
\item[$iii)$] $u < u_{x_0,s,\bar\theta}$ in $\mathcal{T}_{x_0,s,\bar\theta}$,
\item[$iv)$]  $u \leqslant u_{x_0,s,\theta}$ on $\partial(\mathcal{T}_{x_0,s,\theta})$ for every $\theta \in (0, \bar\theta]$,
\item[$v)$]  $u < u_{x_0,s,\theta}$ on the set $ \{\, (x_0,y) \, : \,  0 < y < s  \, \}$, for every $\theta \in (0, \bar\theta]$.
\end{itemize}





Note that, from $iii)-iv)$, we have that
\begin{equation}\label{Hinizio}
\forall \, s \in (0, \bar s), \qquad \mathcal (\mathcal H\mathcal T_{x_0,s,\bar\theta}) \qquad holds.
\end{equation}

\medskip

\noindent Next we prove a result that allows to start the moving plane procedure:

\begin{lem}[Monotonicity near the boundary]\label{starting}
There exists $\hat \lambda>0$
such that, for any $0<\lambda\leq \hat\lambda$, we have
\begin{equation}\label{sdhsshkshkdvjbvj}
u<u_\lambda\qquad\text{in}\quad \Sigma_\lambda\,.
\end{equation}
Furthermore
\begin{equation}\label{monotHAT}
\partial_y u >0\qquad\text{in}\quad \Sigma_{\hat\lambda}\,.
\end{equation}

\end{lem}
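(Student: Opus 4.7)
The plan is to establish the monotonicity \eqref{monotHAT} first, via a boundary Taylor expansion exploiting the equation, and then to derive \eqref{sdhsshkshkdvjbvj} as an immediate corollary. Differentiating the identity $u(x,0)\equiv 0$ twice in $x$ gives $\partial_{xx}u(x,0)\equiv 0$, so $-\Delta u=f(u)$ forces
\[
\partial_{yy}u(x,0)\,=\,-f(0)\,>\,0\qquad\forall\,x\in\R.
\]
By the local analysis carried out at each $x_0 \in \R$ in the paragraphs preceding the lemma, combined with the uniformity of this boundary identity, one can choose $\hat\lambda>0$ small enough that $\partial_{yy}u>0$ on $\R \times [0,2\hat\lambda]$; the structural observation enabling the uniform choice in $x$ is that in this strip $\partial_{yy}u=-f(u)-\partial_{xx}u$, with both $u$ and $\partial_{xx}u$ vanishing identically on $\{y=0\}$. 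Since $u\geq 0$ with $u(x,0)=0$ gives $\partial_y u(x,0)\geq 0$, integrating in $y$ yields
\[
\partial_y u(x,y)\,\geq\,\partial_y u(x,0)+\int_0^y \partial_{yy}u(x,s)\,ds\,>\,0
\]
for every $(x,y)\in\R\times(0,2\hat\lambda]$, which is \eqref{monotHAT} on the (larger) strip $\Sigma_{2\hat\lambda}$.

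Fix now $\lambda\in(0,\hat\lambda]$ and a point $(x,y)\in\Sigma_\lambda$. Since $0<y<\lambda<2\lambda-y\leq 2\hat\lambda$, the vertical segment $\{(x,s):y\leq s\leq 2\lambda-y\}$ lies in $\overline{\Sigma_{2\hat\lambda}}$, and the strict positivity of $\partial_y u$ just proved implies
\[
u(x,y)\,<\,u(x,2\lambda-y)\,=\,u_\lambda(x,y),
\]
which is exactly \eqref{sdhsshkshkdvjbvj}.

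The step I expect to be the genuine obstacle is the uniform-in-$x$ character of $\hat\lambda$: $C^2$-regularity up to the boundary does not by itself provide uniformly continuous second derivatives on the unbounded half-plane, so the claim $\partial_{yy}u>0$ on a full strip $\R\times[0,2\hat\lambda]$ is not a completely free consequence of the pointwise analysis at each $x_0$. The heuristic above circumvents this via the special boundary structure (the vanishing of $u,\partial_x u,\partial_{xx}u$ on $\{y=0\}$), but a rigorous treatment may require either invoking translation-invariant boundary elliptic estimates for the problem, or using the triangle and sliding-rotating machinery of Section~\ref{prelimjkdfbjsk} pointwise in $x_0$ (through properties $i)$--$v)$ of the preceding setup) and then extracting a uniform $\hat\lambda$ from the resulting family of pointwise monotonicity facts by taking $\theta\to 0^+$ in $u<u_{x_0,s,\theta}$.
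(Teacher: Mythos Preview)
Your direct approach has the gap you yourself flag, and your heuristic does not close it. Knowing that $u$ and $\partial_{xx}u$ vanish on $\{y=0\}$ tells you nothing about how fast they (or $f(u)$) grow as $y$ leaves the boundary \emph{uniformly in $x$}; without an a priori bound on $u$ or $\nabla u$ (which the paper explicitly does not assume) there is no translation-invariant elliptic estimate available to rescue this. So the claim ``$\partial_{yy}u>0$ on $\R\times[0,2\hat\lambda]$'' is unproven, and everything downstream collapses.

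Your fallback suggestion is close in spirit to the paper's argument, but you miss the key point that makes it work: one does \emph{not} need to vary $x_0$ or extract any uniformity. The paper fixes a single $x_0$, uses properties $i)$--$v)$ to start with $(\mathcal{HT}_{x_0,s,\bar\theta})$ for $0<s<\bar s$, and then applies Lemma~\ref{largeper} along the pure rotation $g(t)=(s,\,t\theta'+(1-t)\bar\theta)$ to obtain $(\mathcal{HT}_{x_0,s,\theta'})$ for every $\theta'\in(0,\bar\theta)$. The crucial geometric observation is that the third vertex of $\mathcal{T}_{x_0,s,\theta'}$ is $(x_0-s/\tan\theta',\,0)$, which runs off to $-\infty$ as $\theta'\to 0^+$; hence the triangles exhaust the half-strip $\Sigma_s\cap\{x<x_0\}$, and passing to the limit $\theta'\to 0^+$ in $u<u_{x_0,s,\theta'}$ yields $u\le u_s$ on all of $\Sigma_s\cap\{x\le x_0\}$. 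The same argument with negative angles handles $\{x\ge x_0\}$. Thus $u\le u_s$ in the full strip $\Sigma_s$ for $0<s<\bar s$, with $\bar s$ depending only on the chosen $x_0$---no uniform-in-$x$ estimate is ever needed.

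The paper then gets the strict inequality \eqref{sdhsshkshkdvjbvj} by choosing $\hat\lambda<\min\{\bar s,\lambda^*/2\}$ and invoking property $(\mathcal P_\lambda)$ to rule out $u\equiv u_\lambda$ (so the strong maximum principle upgrades $\le$ to $<$), and \eqref{monotHAT} follows from \eqref{sdhsshkshkdvjbvj} via the Hopf lemma applied to $u-u_\lambda$ on $\{y=\lambda\}$. Note that the logical order is the reverse of yours: the comparison $u<u_\lambda$ comes first, and $\partial_y u>0$ is the corollary.
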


\begin{proof}
Let $\bar\theta$ given by \eqref{monteta} and $\, \bar s=\bar s(\bar\theta)$
 as in \eqref{mons}. We showed  that, for any $0<s<\bar s$,
$(\mathcal H\mathcal T_{x_0,s,\bar\theta})$ holds.

We use now Lemma \ref{largeper} as follows : for any fixed $s \in (0, \bar s)$ and $ \theta' \in (0, \bar\theta) $
we consider the rotation
\[
g(t)\,=\,(s(t),\theta (t))\,:=\,(s\,,\,t\theta'+(1-t)\bar\theta)\qquad\quad t\in[0\,,\,1]\,.
\]
Recalling that $(\mathcal H\mathcal T_{x_0,s,\bar\theta})$ holds by \eqref{Hinizio}, we deduce  that also
$(\mathcal H\mathcal T_{x_0,s,\theta'})$ holds.
Therefore, by the fact that $0<\theta'<\bar\theta$ is arbitrary and by a continuity argument, we pass to the limit for $\theta'\rightarrow 0$ and get
\begin{center}
 $u(x,y) \le  u_s(x,y)$ in $\Sigma_{s}\cap \{x\leqslant x_0\}$ for  $0<s<\bar s$.
 \end{center}


The invariance of the considered problem w.r.t. the axis $\{\, x = x_0 \,\} $ enables us to use the same argument to treat the case of negative $\theta$, yielding

\begin{center}
 $u(x,y) \le u_s(x,y)$ in $\Sigma_{s}\cap \{x\geqslant x_0\}$ for  $0<s<\bar s$,
 \end{center}
possibly reducing $\bar s$.

Thus $u(x,y) \le u_s(x,y)$ in $\Sigma_{s}$ for every $ s \in (0,\bar s)$.  The desired conclusion \eqref{sdhsshkshkdvjbvj} then follows by taking $\hat\lambda$ such that $  0 < \hat\lambda < \min \{\bar s, \frac{\lambda^*}{2}\}$. Here we have used in a crucial way that the property $(\mathcal P)_{\lambda}$ holds for every $ \lambda \in (0, \hat\lambda]$, so that the case $u\equiv u_{\lambda}$ in $\Sigma_{\lambda}$ is not possible.

Moreover, by the Hopf's Lemma, for every $ \lambda \in (0, \hat\lambda]$ and every $x \in \R$, we get
\begin{equation}
2\partial_y u(x, \lambda)=\frac{\partial (u-u_{\lambda})}{\partial y}(x, \lambda)>0\,.
\end{equation}
The latter proves \eqref{monotHAT}.

\end{proof}

To proceed further we need
\emph{some notations}: in the case $\lambda^*=\infty$ we set
$$\Lambda=\{\lambda>0\, :\,u<u_{\lambda '}\quad\text{in}\,\,\Sigma_{\lambda'}\,\,\,\,\forall \lambda'<\lambda\}\,.$$
If $\lambda^*$ is finite we use the same notation but considering values of $\lambda$ such that $0<\lambda<\lambda^*/2$, namely
$$\Lambda=\{\lambda<\frac{\lambda^*}{2}\, :\,u<u_{\lambda '}\quad\text{in}\,\,\Sigma_{\lambda'}\,\,\,\,\forall \lambda'<\lambda\}\,.$$
By Lemma \ref{starting} we know that $\Lambda$ is not empty and we can define
\begin{equation}\label{hhhhhhhhhh}
\bar{\lambda}=\sup\,\,\Lambda\,.
\end{equation}
\noindent The proof of the theorem will be  done if we show that $\bar\lambda=+\infty$, when $ \lambda^* = \infty$ (resp. $\bar\lambda=\frac{\lambda^*}{2}$, when
$\lambda^*$ is finite).
Therefore we argue by contradiction and assume that $\bar\lambda<+\infty$, when $ \lambda^* = \infty$ (resp. $\bar\lambda<\frac{\lambda^*}{2}$, when $\lambda^*$ is finite).\

First, as above, we deduce  that $u$ is strictly monotone increasing in the $e_2$-direction in $\Sigma_{\bar\lambda}$, with
\begin{equation}\label{monotogsfdadda}
\partial_y u \,>\,0\qquad\text{in}\quad\Sigma_{\bar{\lambda}}\,.
\end{equation}

To proceed further we need to prove the following

\begin{lem}\label{caffbe} Let $\lambda^*$ and $\bar\lambda$ be as above. Assume that there is a point  $x_0\in\mathbb{R}$ satisfying $u(x_0,2\bar\lambda)>0$. Then there exists $\bar \delta >0$ such that:
for any $-\bar\delta\leqslant \theta\leqslant\bar\delta $ and for any
$0< \lambda\leqslant\bar{\lambda}+\bar\delta$, we have
 $$u(x_0,y)<u_{x_0,\lambda,\theta}(x_0,y)\,,$$
for $0< y<\lambda$.
\end{lem}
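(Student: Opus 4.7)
The approach is a local continuity/compactness analysis of the function
$F(y;\lambda,\theta):=u_{x_0,\lambda,\theta}(x_0,y)-u(x_0,y)$,
which I want to show is strictly positive on $\{0<y<\lambda\}$ for $(\lambda,\theta)$ close enough to the critical configuration $(\bar\lambda,0)$. The hypothesis $u(x_0,2\bar\lambda)>0$ plays a decisive role by preventing the weak inequality coming from the moving-plane machinery from collapsing into an equality.

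First I would upgrade the weak inequality $u\le u_{x_0,\bar\lambda,0}$ on $\Sigma_{\bar\lambda}$ (obtained by passing to the limit in the definition of $\bar\lambda$) to strict inequality. The difference $w:=u-u_{x_0,\bar\lambda,0}$ satisfies a linear equation $-\Delta w=c(x)\,w$ with bounded coefficient $c$ (since $f$ is locally Lipschitz and $u,u_{x_0,\bar\lambda,0}$ are bounded on $\overline{\Sigma_{\bar\lambda}}$), is $\le 0$, and is not identically zero because $w(x_0,0)=-u(x_0,2\bar\lambda)<0$. The strong maximum principle then yields $w<0$ strictly in $\Sigma_{\bar\lambda}$, and Hopf's lemma applied at the upper boundary $\{y=\bar\lambda\}$ gives $\partial_y u(x_0,\bar\lambda)>0$.

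Next I would perform the continuity extension. A crucial feature is the identity $F(\lambda;\lambda,\theta)\equiv 0$, which holds because $(x_0,\lambda)$ lies on the reflection axis $L_{x_0,\lambda,\theta}$; thus positivity of $F$ near $y=\lambda$ cannot follow from pure continuity. Using the explicit formula $T_{x_0,\lambda,\theta}(x_0,y)=\bigl(x_0+(y-\lambda)\sin 2\theta,\;\lambda-(y-\lambda)\cos 2\theta\bigr)$ one computes $\partial_y F(\bar\lambda;\bar\lambda,0)=-2\,\partial_y u(x_0,\bar\lambda)<0$. I would then cover $(0,\lambda)$ by three regimes: on a left sub-interval $[0,\eta]$, continuity of $F$ together with $F(0;\bar\lambda,0)=u(x_0,2\bar\lambda)>0$ propagates positivity to parameters near $(\bar\lambda,0)$; on a middle compact interval $[\eta,\bar\lambda-\eta]$, a uniform positive lower bound of $F(\cdot;\bar\lambda,0)$ is transferred by uniform continuity; on a right interval $[\lambda-\eta,\lambda)$, the preserved sign $\partial_y F<0$ combined with $F(\lambda;\lambda,\theta)=0$ forces $F(y;\lambda,\theta)\ge C(\lambda-y)>0$ for $y<\lambda$ and some $C>0$. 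Taking $\bar\delta$ smaller than the radii coming from each of the three regimes yields the conclusion for $(\lambda,\theta)$ in a neighborhood of $(\bar\lambda,0)$.

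For the remaining range $0<\lambda\le\bar\lambda-\bar\delta$ the argument is simpler: the definition of $\bar\lambda$ supplies $u<u_\lambda$ strictly in $\Sigma_\lambda$ for $\theta=0$ and Hopf at $\{y=\lambda\}$ gives $\partial_y u(x_0,\lambda)>0$, yielding uniform positivity of $F(\cdot;\lambda,0)$ on $[0,\lambda)$ over any compact sub-interval of $\lambda$; a continuity/compactness perturbation extends this to $|\theta|\le\bar\delta$, and the small-$\lambda$ regime $\lambda\le\bar s$ is directly covered by property $(v)$ at the start of Section 3. The main obstacle is the right endpoint $y=\lambda$, where $F$ vanishes identically: a pure continuity argument cannot deliver strict positivity there, and one must exploit the Hopf-type non-degeneracy $\partial_y u(x_0,\bar\lambda)>0$—itself a direct consequence of the hypothesis $u(x_0,2\bar\lambda)>0$—to ensure that $\partial_y F$ remains strictly negative on a full neighborhood of $(\bar\lambda,\bar\lambda,0)$.
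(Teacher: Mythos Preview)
Your proposal is essentially correct and rests on the same initial observation as the paper: the hypothesis $u(x_0,2\bar\lambda)>0$ rules out $u\equiv u_{\bar\lambda}$, so the strong maximum principle gives $u<u_{\bar\lambda}$ strictly in $\Sigma_{\bar\lambda}$, and Hopf at $\{y=\bar\lambda\}$ yields $\partial_y u(x_0,\bar\lambda)>0$. From there the two routes diverge. The paper argues by contradiction: it assumes sequences $\delta_n\to 0$, $\theta_n\in[-\delta_n,\delta_n]$, $\lambda_n\le\bar\lambda+\delta_n$, $y_n\in(0,\lambda_n)$ with $u(x_0,y_n)\ge u_{x_0,\lambda_n,\theta_n}(x_0,y_n)$, extracts a subsequential limit $(\tilde\lambda,0,\tilde y)$ with $\tilde y=\tilde\lambda\in(0,\bar\lambda]$, and uses the mean value theorem to find points $\xi_n$ with $\partial u/\partial V_{\theta_n}(\xi_n)\le 0$; passing to the limit gives $\partial_y u(x_0,\tilde\lambda)\le 0$, contradicting \eqref{monotogsfdadda} and \eqref{fttfryyrhdhdhdhjsjsjs}. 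Your approach is instead a direct construction: a three-region decomposition of $(0,\lambda)$ combined with a case analysis in $\lambda$. The paper's sequence-plus-mean-value argument is more economical, handling all parameter ranges in one stroke and dispensing with the left/middle/right bookkeeping; your decomposition is more explicit and makes the role of each ingredient (positivity at $y=0$, compactness in the middle, the Hopf-derived non-degeneracy at $y=\lambda$) completely transparent.

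Two small cautions on your write-up. First, $u$ and $u_{\bar\lambda}$ need not be bounded on $\overline{\Sigma_{\bar\lambda}}$ (the strip is unbounded and no global bound on $u$ is assumed); only local boundedness of $c(x)$ is available, but that is all the strong maximum principle requires. Second, in the ``remaining range'' $\lambda\in[\bar s,\bar\lambda-\bar\delta]$ your sketch tacitly uses $F(0;\lambda,0)=u(x_0,2\lambda)>0$ to control the left sub-interval, but for $\lambda\in(\bar\lambda/2,\bar\lambda)$ it is not a priori clear that $u(x_0,2\lambda)>0$; if it vanished, $F(\cdot;\lambda,0)$ would not be bounded away from zero near $y=0$ and a pure continuity perturbation in $\theta$ could fail there. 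This is fixable (e.g.\ by invoking the $V_\theta$-monotonicity in $Q_{\bar h}(x_0)$ for small $y$, or by a separate Hopf argument at $y=0$), but deserves a sentence.
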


\begin{proof}
First we note that
$\partial_y u (x_0,\bar{\lambda})>0$.
In fact, by construction $u< u_{\bar{\lambda}}$
in $\Sigma_{\bar{\lambda}}$. Therefore, by the Hopf's Lemma,  we have
\begin{equation}\label{fttfryyrhdhdhdhjsjsjs}
2\partial_y u (x_0,\bar{\lambda})=\frac{\partial (u-u_{\bar{\lambda}})}{\partial y}(x_0,\bar{\lambda})>0\,.
\end{equation}
We argue now by contradiction. If the lemma were false, we found a sequence of small $\delta _n\rightarrow 0$
and
$-\delta _n \leqslant\theta_n\leqslant \delta _n$,
 $0<\lambda_n\leqslant \bar{\lambda}+\delta _n$,
 $0< y_n<  \lambda_n$ with $$u(x_0,y_n)\geqslant u_{x_0,\lambda_n ,\theta_n}(x_0,y_n).$$

Possibly considering subsequences,  we  may and do assume that $\lambda_n\rightarrow \tilde{\lambda}\leqslant \bar{\lambda}$. Also
$y_n\rightarrow \tilde{y}$ for some $\tilde{y}\leqslant \tilde{\lambda}$. Considering the construction of $Q_{\bar h}(x_0)$ as above and in particular taking into account \eqref{monteta} and \eqref{sbvdvjkdbvdvbjkd}, we deduce that $\tilde\lambda>0$ and,
by continuity, it follows that $u(x_0,\tilde{y})\geqslant u_{\tilde{\lambda}}(x_0,\tilde{y})$.
Consequently
$y_n\rightarrow \tilde{\lambda}=\tilde{y}$, since  we know that $u<u_{\lambda '}$  in $\,\Sigma_{\lambda '}$ for any $\lambda'\leqslant \bar{\lambda}$ and $u(x_0,0)=0<u(x_0,2\bar\lambda)$.
By the mean value theorem since $u(x_0,y_n)\geqslant u_{x_0,\lambda_n ,\theta_n}(x_0,y_n)$, it follows
$$\frac{\partial u}{\partial V_{\theta_n}}(x_n,y_n)\leqslant 0$$
at some point $\xi_n\equiv (x_n,y_n)$ lying on  the line from $(x_0,y_n)$ to $T_{x_0,\lambda_n,\theta_n}(x_0,y_n)$, recalling that
the vector $V_{\theta_n}$ is orthogonal to the line $L_{x_0,\lambda_n,\theta_n}$.  Since $V_{\theta_n}\rightarrow e_2$ as $\theta_n\rightarrow 0$.\\
Taking the limit it follows
$$\partial_y u (x_0,\tilde{\lambda})\leqslant 0$$
which is impossible by \eqref{fttfryyrhdhdhdhjsjsjs} and \eqref{monotogsfdadda}.
\end{proof}

\begin{proof}[\underline{End of the first Proof of Theorem \ref{T:1}}]

Since we are assuming that $\bar\lambda<+\infty$, when $ \lambda^* = \infty$ (resp. $\bar\lambda<\frac{\lambda^*}{2}$, when $\lambda^*$ is finite), we can find $x_0 \in \R$ such
that $u(x_0,2\bar\lambda)>0$. Let $Q_{\bar h}(x_0)$ be constructed as above and pick $\bar\theta$ given by \eqref{monteta}. 

Let also $\bar \delta $ as in Lemma \ref{caffbe}. Then fix $\theta_0>0$ with $\theta_0\leqslant \bar \delta$ and $\theta_0\leqslant\bar\theta$. Let us set
\[
s_0\,:=\,s_0(\theta_0)\,,
\]
such that  the triangle $\mathcal T_{x_0,s_0,\theta_0}$ and its reflection w.r.t. $L_{x_0,s_0,\theta_0}$ is contained in $Q_{\bar h}(x_0)$ and consequently $(\mathcal H\mathcal T_{x_0,s_0,\theta_0})$ holds. It is convenient to assume that $s_0\leqslant \hat\lambda$  with $\hat\lambda$ as in Lemma \ref{starting}.
For any
\[
s_0<s\leqslant\bar\lambda+\bar\delta,\qquad 0<\theta<\theta_0\,,
\]
we carry out the \emph{sliding-rotating technique} exploiting
Lemma \ref{largeper} with
\[
g(t)\,=\,(s(t),\theta (t))\,:=\,(ts+(1-t)s_0\,,\,t\theta+(1-t)\theta_0)\qquad\quad t\in[0\,,\,1]\,.
\]
By Lemma \ref{caffbe} we deduce that the boundary conditions required to apply Lemma \ref{largeper} are fulfilled and therefore, by Lemma \ref{largeper}, we get that $(\mathcal H\mathcal T_{x_0,s,\theta})$ holds.  We can now argue as in the proof of Lemma \ref{starting} and deduce that
 $u(x,y)<u_\lambda(x,y)$ in $\Sigma_{\lambda}$  for any $0<\lambda\leqslant \bar\lambda+\bar\delta$. This provides a contradiction unless $\bar\lambda=+\infty$
 (resp.
$\bar\lambda=\,\frac{\lambda^*}{2}$, if $\lambda^*$ is finite). Arguing e.g. as in the proof of Lemma \ref{starting}, we deduce
 \[
\partial_y u >0\qquad\text{in}\quad\mathbb{R}^2_+\qquad\,\,\,\text{if}\quad\lambda^*=+\infty\,,
 \]
while
 \begin{equation}\nonumber
\partial_y u > 0\qquad \text{in}\quad\Sigma _{\frac{\lambda^*}{2}}\qquad\text{if}\quad\lambda^*<+\infty\,.
\end{equation}
As a consequence of the monotonicity result, we deduce that $u$ is positive in $\mathbb{R}^2_+$ if $\lambda^*=+\infty$.\\

\noindent In  we assume that $\lambda^*$ is finite, we deduce by continuity that
\[
u\leq u_{\lambda^*/2} \qquad \text{in}\quad\Sigma_{\lambda^*/2}\,.
\]
By the strong comparison principle, we deduce that: either  $u< u_{\lambda^*/2}$ or $u\equiv u_{\lambda^*/2}$, in $\Sigma_{\lambda^*/2}$.
Note that, by the definition of $\lambda^*$, we have that $\{y=\lambda^*\}\subseteq \{u=0\}$, that also implies $\{y=\lambda^*\}\subseteq \{\nabla u=0\}$ since $u$ is nonnegative. If $u< u_{\lambda^*/2}$  in $\Sigma_{\lambda^*/2}$,
we get by the Hopf's boundary Lemma (see \cite{GT}) that $\partial_y (u_{\lambda^*/2}-u)>0$  on $\{y=0\}$. Since $\partial_y (u_{\lambda^*/2})=0$ on $\{y=0\}$ (by the fact that $\{y=\lambda^*\}\subseteq \{\nabla u=0\}$) this provides a contradiction with the fact that $u$ is nonnegative. Therefore it occurs $u\equiv u_{\lambda^*/2}$, in $\Sigma_{\lambda^*/2}$. \\

Note now that, since
$
\{y=\lambda^*\}\subseteq \{u=0\}\cap\{\nabla u=0\}\,,
$
by symmetry we deduce
\[
\{y=0\}\subseteq \{u=0\}\cap\{\nabla u=0\}\,.
\]
Therefore we deduce that $u$ is one-dimensional by the \emph{unique continuation principle} (see for instance Theorem 1 of \cite{FVb} and the references therein). Indeed, for every $t \in \R$, the function $u^t(x,y) : = u(x+t,y) $ is a nonnegative solution of \eqref{E:P} with $ u^t = \nabla u^t = 0$ on $ \partial \R^2_+$ and the unique continuation principle implies that $ u \equiv u^t $ on $\R^2_+$. This immediately gives that $u$ depends only on the variable $y$, i.e.,
\[
u(x,y)\,=\,u_0(y) \qquad \forall \, (x,y) \in \R^2_+
\]

where $u_0 \in C^2([0, +\infty)) $ is the unique solution of $u_0''+f(u_0)=0$ with $u_0'(0)=u_0(0)=0$.

The remaining part of the statement, namely the properties of $u_0$, follows by a simple ODE analysis.
\end{proof}

\section{Second Proof of Theorem~\ref{T:1}}\label{kvbvvcvcvcvcvcvv}

This proof makes use of the \emph{unique continuation principle} to start the moving plane procedure. \

\begin{proof}
[\underline{Second Proof of Theorem \ref{T:1}}]
\noindent If we assume that
\[
\nabla u (x,0)=0\qquad\text{for any}\quad x\in\R\,,
\]
then it follows that $u$ coincides with $u_0$ by the unique continuation principle, with $u_0$ as in the statement. A simple analysis of the associated ordinary differential equation shows in this case that $u_0$ is monotone increasing if $\lambda^*=\infty$, while $u_0$ is periodic when $\lambda^*$ is finite. Therefore the proof is done in this case and we reduce to consider the case:
\[
\text{there exists}\,\,x_0\in\R\quad\text{such that}\quad \nabla u(x_0,0)\neq 0\,.
\]
Necessarily in this case we have that $\partial_y u (x_0,0)>0$ since the case $\partial _y u (x_0,0)<0$ is not possible because $u$ is nonnegative.
Setting as above
\begin{equation}\label{QH}
Q_{h}(x_0)=\{(x,y)\,:\, |x-x_0| \leqslant h, 0 \leqslant y \leqslant 2h\}\,,
\end{equation}
recalling that $u\in C^2(\overline{\mathbb{R}^2_+})$, we can therefore fix
 $\bar h>0$ small such that
\begin{equation}\nonumber
 \partial_{y} u >0\qquad \text{in }\quad Q_{\bar h}(x_0).
\end{equation}
Exploiting again the fact that $u\in C^2(\overline{\mathbb{R}^2_+})$, we  can consequently consider $\bar \theta=\bar \theta(\bar h)$ small  such that
\begin{equation}\label{monteta2}
\frac{\partial\,\,u}{\partial V_\theta} >0\qquad \text{in }\quad Q_{\bar h}(x_0)\qquad\text{for}\,\,\,\, - \bar \theta \leq\theta\leq\bar\theta\,.
\end{equation}

>From the above \eqref{monteta2}, we immediately deduce the existence of $ \bar s$ as in \eqref{mons} and satisfying properties {\it i)-v)} at the beginning of the first proof of Theorem \ref{T:1} so that the moving plane procedure can be started.\

\noindent To proceed further note that, for $\bar\lambda$ defined as above, it occurs that
\[
u(x_0,2\bar\lambda)>0\,.
\]
In fact, if this is not the case, then we have $\nabla u(x_0,2\bar\lambda)=0$. Consequently
\[
\partial_y u (x_0,0) = \partial_y (u-u_{\bar\lambda})(x_0,0)\leqslant 0\,,
\]
while $\partial_y u (x_0,0)>0$ by construction.

Therefore Lemma \ref{caffbe} can be exploited
and, since the remaining part of the proof can be repeated verbatim, we omit it.

\end{proof}

\section{Proof of Theorems \ref{casteorem}-\ref{T:3spe} and Theorems \ref{casteorembis}-\ref{T:3+}}\label{fdfdfdfdfdffdfdfdfdfd}

\begin{proof}[\underline{Proof of Theorem \ref{casteorem}}]
By Theorem \ref{T:1} we have that, either $\lambda^*<+\infty$ and thus $u$ is one-dimensional and periodic or $\lambda^*=+\infty$ and $u$ satisfies $u>0$ and $\partial_y u >0$ in $\R^2_+$.

\end{proof}

\begin{proof}[\underline{Proof of Theorem \ref{casteorembis}}]
Let us first consider the case $f(0) <0$. By Theorem \ref{casteorem}, either $u$ is one-dimensional and periodic, and we are done, or $u>0$ and $\partial_y u >0$ in $\R^2_+$. Therefore, since we are assuming that $|\nabla u|$ is bounded, we are in position to apply Theorem 1.2  in \cite{FV} to conclude that $u$ is one-dimensional. When $ f(0) \ge 0,$ either $u$ is identically zero, or $u>0$ and $\partial_y u >0$ in $\R^2_+$ by Theorem 1.1 in \cite{DS3}. The desired conclusion then follows by applying once again Theorem 1.2  in \cite{FV}.

\end{proof}

\begin{proof}[\underline{Proof of Theorem \ref{xxx5}}]
Let us assume by contradiction that $\lambda^*=+\infty$ in Theorem \ref{T:1}, hence  $\partial_y u>0$ in $\R^2_+$. This implies that
\begin{equation}\nonumber
\int_{\R^2_+}\,|\nabla \varphi|^2\,-\,f'(u)\varphi^2\,dx\geqslant 0\qquad\text{for any}\quad\varphi\in C^\infty_c (\R^2_+)\,.
\end{equation}
Since we are assuming that $f'(t)\geqslant c>0$  for any $t\in (0,\infty)$, it follows that
\begin{equation}\label{jdhgkdhhgkshdvcvcvcvc}
\int_{\R^2_+}\,|\nabla \varphi|^2\,dx\geqslant c\int_{\R^2_+}\varphi^2\,dx \qquad\text{for any}\quad\varphi\in C^\infty_c  (\R^2_+)\,.
\end{equation}

Let us now consider an arbitrary  open ball $\Omega$ such that $\overline \Omega \subset \R^2_+$  and let $\lambda_1(\Omega)$ be the first eigenvalue of the Laplace operator in $\Omega$ under zero Dirichlet boundary conditions. By the variational characterization of $\lambda_1(\Omega)$ and by \eqref{jdhgkdhhgkshdvcvcvcvc},
it would follows that
\[
\lambda_1(\Omega)\geqslant c>0
\]
and this is clearly impossible since $\lambda_1(\Omega)$ approaches zero when $\Omega$ is chosen arbitrary large.
Therefore, necessarily, it occurs that $\lambda^*$ is finite, $u$ is one-dimensional and periodic and  $u$ cannot be positive.
\end{proof}

\begin{proof}[\underline{Proof of Theorem \ref{Txxx5}}]
\noindent By  Theorem \ref{xxx5}, $u$ is one-dimensional and periodic with profile $u_0$ satisfying \eqref{oneD} . A simple ODE analysis shows that $$u_0(y)=1-\cos y$$ and the result is proved.
\end{proof}

\begin{proof}[\underline{Proof of Theorem \ref{T:3}}]

The proof is analogous to the proof of Theorem \ref{T:1}. Let us only provide a few details.\\
\noindent Consider first the case  $\lambda^*=2b$. The moving plane procedure can be started as in Theorem \ref{T:1} exploiting Lemma \ref{starting}. Then we define $\bar\lambda$ as in \eqref{hhhhhhhhhh} ad we deduce that necessarily
$\bar\lambda=b$ arguing by contradiction exactly as in the proof of Theorem \ref{T:1} and exploiting the assumption $\lambda^*=2b$. Note that there is no need of changes in the proof of Lemma \ref{caffbe}. Therefore we deduce that  $u\leqslant u_\lambda$ in $\Sigma_\lambda$ for any $0<\lambda<b$ and, by continuity, we have
\[
u\leqslant u_b\qquad\text{in}\quad\Sigma_{b}\,.
\]
As a consequence of the moving plane procedure we also deduce that $u$ is monotone non-decreasing in $\Sigma_b$. Actually, arguing as in Lemma \ref{starting} we have $\partial_y u >0$ in $\Sigma_b$. In particular \emph {$u$ is positive} in the entire strip $\Sigma_{2b}.$ \\
Performing the moving plane method in the opposite direction $(0,-1)$ and \emph {observing that the corresponding $ \lambda^{*}$ is still equal to $2b$} (by the positivity of $u$),  we derive in the same way that
\[
u\geqslant u_b\qquad\text{in}\quad\Sigma_{b}\,,
\]
and this implies that $u$ is symmetric with respect to $\{y=b\}$.\\

Let us now consider the case $\lambda^*<2b$. In this case arguing as above we deduce that $u$ is positive in $\Sigma_{\lambda^*}$,  $u\leqslant u_\lambda$ in $\Sigma_\lambda$ for any $0<\lambda<\lambda^*/2$ and
\[
u\equiv u_{\lambda^*/2}\qquad\text{in}\quad\Sigma_{\lambda^*/2}\,.
\]
As above
$
\{y=\lambda^*\}\subseteq \{u=0\}\cap\{\nabla u=0\}\,,
$
that gives, by symmetry
\[
\{y=0\}\subseteq \{u=0\}\cap\{\nabla u=0\}\,.
\]
Now we deduce that $u$ is one-dimensional by the \emph{unique continuation principle} and consequently $u(x,y)\equiv u_0(y)$ for $u_0$ defined as in the statement.
\end{proof}

\begin{proof}[\underline{Proof of Theorem \ref{T:3+}}]
The proof is completely analogous to the proof of Theorem \ref{T:3}. Actually, it is easier in this case since we are assuming that $f(0)\geqslant 0$ so that the \emph{strong maximum principle} and the \emph{Hopf's Lemma} can be exploited. In particular we can start the moving plane procedure recovering \eqref{monteta2} via the \emph{Hopf's Lemma}.
Then  we complete the proof repeating verbatim the proof of Theorem \ref{T:3} and exploiting the fact that in this case, by the \emph{strong maximum principle}, the solution is either trivial or positive.
\end{proof}

\begin{proof}[\underline{Proof of Theorem \ref{T:3spe}}]
The proof of Theorem \ref{T:3spe} follows combining  Theorem \ref{T:3} and Theorem \ref{T:3+}.
\end{proof}

\section{Some results in any dimension $N \ge 2$}\label{higer}
In this section we state and prove some results for any dimension $ N \ge2$. We continue to assume that $f$ is locally Lipschitz continuous and satisfies $ f(0) <0$.

Let us consider the problem
\begin{equation}\label{E:PN}
\begin{cases}
-\Delta u=f(u) & \text{ in } \mathbb{R}^N_+\\
\quad u \geqslant 0 & \text{ in } \mathbb{R}^N_+\\
\quad u=0\,\, &\text{ on } \partial\mathbb{R}^N_+
\end{cases}
\end{equation}
and let us denote by $(x',y)$ a point in $\mathbb{R}^N$, with $x'=(x_1,\ldots,x_{N-1})$ and $y=x_N$.
For a fixed solution $u$, we consider the property $(\mathcal P_\mu)$ as in the introduction, and we have the following
\begin{thm}\label{dfbjkrgbkcvdvcgdcg}
Let $u\in C^2(\overline{\mathbb{R}^N_+})$ be a nonnegative solution to \eqref{E:PN} and let us set
\begin{equation}\label{LAMBDAbis}
\Lambda^*=\Lambda^*(u)\,:=\,\{\lambda>0\,\,:\,\,(\mathcal P_\mu)\quad\text{holds for every} \,\, 0<\mu\leq\lambda\}\,.
\end{equation}
Then
\[
\Lambda^*\,\ne\,\emptyset\,.
\]
\end{thm}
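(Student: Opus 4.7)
The plan is to show that $\Lambda^*$ contains a small positive number by exhibiting a single vertical line $\{x'=x_0'\}$ along which $u$ is strictly positive for all small enough heights $y>0$.

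First, I would fix an arbitrary point $x_0'\in\R^{N-1}$ and study the function $g(y):=u(x_0',y)$ for $y\ge 0$. Since $u\equiv 0$ on $\partial\R^N_+=\{y=0\}$, all tangential derivatives vanish identically there; in particular $\partial_{x_ix_j}u(x_0',0)=0$ for $i,j\in\{1,\dots,N-1\}$, which forces $\Delta u(x_0',0)=\partial_{yy}u(x_0',0)$. Using the equation and the boundary condition, $\partial_{yy}u(x_0',0)=-\Delta u(x_0',0)=-f(u(x_0',0))=-f(0)>0$. Hence $g''(0)>0$. On the other hand, $g(0)=0$ and, because $u\ge 0$ in $\R^N_+$ with $u(x_0',0)=0$, the first-order derivative satisfies $g'(0)=\partial_y u(x_0',0)\ge 0$.

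Next I would invoke the Taylor expansion of $g$ at $y=0$: since $u\in C^2(\overline{\R^N_+})$,
\begin{equation*}
g(y)=g'(0)\,y+\tfrac12\,g''(0)\,y^2+o(y^2),\qquad y\to 0^+.
\end{equation*}
Because $g'(0)\ge 0$ and $g''(0)>0$, there exists $\delta=\delta(x_0')>0$ such that $g(y)>0$ for every $y\in(0,\delta]$. Consequently $u(x_0',\mu)\neq 0$ for all $\mu\in(0,\delta]$, which is exactly the statement that the hyperplane $\{y=\mu\}$ meets $\{u\neq 0\}$ at the point $(x_0',\mu)$. Therefore the property $(\mathcal{P}_\mu)$ holds for every $0<\mu\le\delta$, so $\delta\in\Lambda^*$ and $\Lambda^*\neq\emptyset$.

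I do not foresee a substantive obstacle here: the argument is entirely local near a single boundary point and uses only the equation, the vanishing of $u$ on $\partial\R^N_+$, its nonnegativity, and its $C^2$ regularity up to the boundary. The one point deserving attention is ensuring the sign of $g'(0)$; this is handled by observing that an interior minimum of $g$ on $[0,\infty)$ at $y=0$ forces $g'(0)\ge 0$, so the first-order term in Taylor's formula cannot spoil the positive contribution from the quadratic term $\tfrac12 g''(0)y^2$.
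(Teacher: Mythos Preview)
Your proof is correct. Both your argument and the paper's hinge on the same core observation: since $u\equiv 0$ on $\{y=0\}$, all tangential second derivatives vanish there, so $\partial_{yy}u(x_0',0)=\Delta u(x_0',0)=-f(0)>0$. (Note a slip in your displayed chain of equalities: you wrote $\partial_{yy}u=-\Delta u=-f(u)$, whereas it should read $\partial_{yy}u=\Delta u=-f(u)$; the two sign errors cancel and your conclusion $g''(0)=-f(0)>0$ is correct.) From this common point the executions diverge. You argue directly, combining $g'(0)\ge 0$ with $g''(0)>0$ in a Taylor expansion to obtain $u(x_0',y)>0$ for all small $y>0$, whence $(\mathcal P_\mu)$ holds for every small $\mu$ and $\Lambda^*\neq\emptyset$. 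The paper instead argues by contradiction: assuming $\Lambda^*=\emptyset$, it produces levels $\mu_n\downarrow 0$ with $u\equiv 0$ on $\{y=\mu_n\}$; then, for fixed $x_0'$, $\partial_y u(x_0',\cdot)$ vanishes both at $\mu_n$ (since $(x_0',\mu_n)$ is an interior minimum of $u$) and at an interior maximum $t_n\in(0,\mu_n)$, so Rolle's theorem yields $\xi_n\to 0$ with $\partial_{yy}u(x_0',\xi_n)=0$, contradicting $\partial_{yy}u(x_0',0)>0$. Your direct route is a bit more economical and makes explicit use of the sign $g'(0)\ge 0$; the paper's route trades that for Rolle's theorem and the extra information that $u$ vanishes on an entire hyperplane.
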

\begin{proof}
We prove the theorem by contradiction and therefore we assume that there exists a sequence of positive numbers $\mu_n$ such that,
$\mu_n$ tends to zero as $n\rightarrow \infty$ and $(\mathcal P_{\mu_n})$ fails, namely
\begin{equation}\nonumber
\{y=\mu_n\}\subset\{u=0\}\,.
\end{equation}

For $x'_0\in\R^{N-1}$ fixed, by the fact that $u$ is nonnegative,
 it follows that $\nabla u(x'_0,\mu_n)=0$. Exploiting the Dirichlet condition, it also follows that
 the real valued function $u(x'_0,t)$ with $t\in[0,\mu_n]$, has an interior local maximum  $t_n\in(0,\mu_n)$. Therefore
$ \partial_y u (x'_0,t_n)=0$.

\noindent By the mean value theorem we deduce that
\[
\partial_{yy} u (x'_0,\xi_n)=0\qquad\text{for some}\quad\xi_n\in[t_n\,,\,\mu_n]\,.
\]
Since $u\in C^2(\overline{\mathbb{R}^N_+})$, letting $n\rightarrow\infty$, we infer that
\[
\partial_{yy} u (x'_0,0)=0\,.
\]
This is a contradiction since in this case, recalling that $u=0$ in $\{y=0\}$, it  follows that
\[
-\Delta u (x'_0,0)=0> f(u(x'_0,0))
\]
and the result is proved.
\end{proof}

As a consequence of Theorem \ref{dfbjkrgbkcvdvcgdcg}, we see that
\begin{equation}\label{lambdabis}
\lambda^*=\lambda^*(u)\,:=\, \sup \Lambda^* \in (0, +\infty].
\end{equation}
We have the following
\begin{thm}\label{anydimension}
Let $u\in C^2(\overline{\mathbb{R}^N_+})$ be a nonnegative solution to \eqref{E:PN} and let $\lambda^*=\lambda^*(u)$
defined by \eqref{lambdabis}. Then, if $\lambda^*$ is finite, it follows that
  $u$ is one-dimensional and periodic, i.e.
\[
u(x,y)\,=\,u_0(y) \qquad \forall \, (x,y) \in \R^2_+
\]

where $ u_0\in C^2(\R, [0,\infty))$ is periodic of period $\lambda^* $ and is the unique solution of


\begin{equation}\label{oneDsup}
u_0''+f(u_0)=0\quad\text{in}\,\,[0,\infty)\qquad
u_0'(0)=u_0(0)=0=u_0'(\lambda^*)=u_0(\lambda^*)\,.
\end{equation}

Also, $u$ is symmetric with respect to $\{y=\frac{\lambda^*}{2}\}$ with $\partial_y u > 0$ in $\Sigma_{\frac{\lambda^*}{2}}$.
\end{thm}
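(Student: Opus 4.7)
My plan is to bypass the moving-plane machinery of the planar case (whose rotating-plane start in Section~\ref{prelimjkdfbjsk} is intrinsically 2D) and derive the one-dimensional symmetry directly from $\lambda^* < +\infty$ by applying the unique continuation principle to horizontal translates of $u$. Concretely, I first upgrade the inclusion $\{y=\lambda^*\}\subseteq\{u=0\}$ (the continuity observation made right after~\eqref{lambdabis}: if $(\mathcal P_{\lambda^*})$ held, continuity of $u$ would extend it to some interval past $\lambda^*$, contradicting the supremum) to $\{y=\lambda^*\}\subseteq\{u=0\}\cap\{\nabla u=0\}$. Indeed, since $u\equiv 0$ on the entire hyperplane, all tangential derivatives vanish automatically, and the normal derivative $\partial_y u$ also vanishes because $u\ge 0$ attains its global minimum there.

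Next, for any $i\in\{1,\ldots,N-1\}$ and $t\in\R$, the translate $u^t(x',y):=u(x'+te_i,y)$ is another $C^2$ nonnegative solution of \eqref{E:PN}, and the difference $v:=u-u^t$ satisfies $-\Delta v = c(x)\,v$ on $\R^N_+$, with
\[
c(x) := \begin{cases} \dfrac{f(u(x))-f(u^t(x))}{u(x)-u^t(x)}, & u(x)\ne u^t(x),\\[4pt] 0, & \text{otherwise},\end{cases}
\]
bounded on compact subsets of $\overline{\R^N_+}$ by the local Lipschitz property of $f$ together with the local boundedness of $u,u^t$. The previous step yields $v=\nabla v = 0$ on the interior hyperplane $\{y=\lambda^*\}$, so the unique continuation principle (cf.\ Theorem~1 of~\cite{FVb}) gives $v\equiv 0$ on the connected set $\R^N_+$. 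Since $i$ and $t$ are arbitrary, $u$ is invariant under translations in $x'$, i.e., $u(x',y)=u_0(y)$.

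The remaining assertions reduce to an ODE analysis. The function $u_0\in C^2([0,+\infty))$ solves $-u_0''=f(u_0)$ with $u_0\ge 0$ and $u_0(0)=u_0(\lambda^*)=u_0'(\lambda^*)=0$; the conserved energy $E=\tfrac12(u_0')^2+F(u_0)$ with $F(s):=\int_0^s f$ equals $0$ at $y=\lambda^*$, hence identically, which also forces $u_0'(0)=0$ and establishes \eqref{oneDsup}. Uniqueness of $u_0$ follows from Picard--Lindel\"of applied to the first-order system $(u_0,u_0')'=(u_0',-f(u_0))$, whose right-hand side is locally Lipschitz and, crucially, does not admit the trivial branch $u_0\equiv 0$ (ruled out by $f(0)<0$). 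The closed phase-plane orbit $(u_0')^2=-2F(u_0)$ delivers periodicity of $u_0$; its period must equal $\lambda^*$ since $u_0>0$ on $(0,\lambda^*)$ by the very definition of $\lambda^*$. Symmetry about $\{y=\lambda^*/2\}$ follows by applying Cauchy uniqueness at $y=\lambda^*$ to the reflected function $y\mapsto u_0(\lambda^*-y)$, and $u_0'>0$ on $(0,\lambda^*/2)$ from $u_0''(0)=-f(0)>0$ together with the energy identity.

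The main obstacle I anticipate is the justification of the unique continuation step from the \emph{interior} hyperplane $\{y=\lambda^*\}$, whereas the 2D proofs of Theorem~\ref{T:1} invoke \cite{FVb} from the boundary $\{y=0\}$. Should the formulation there require adaptation for this case, the backup is to extend the comparison / moving-plane tools of Section~\ref{prelimjkdfbjsk} to $\R^N$ (replacing tilted triangles by analogous wedge-shaped regions in $\R^{N-1}\times\R$) in order to obtain $u\equiv u_{\lambda^*/2}$ in $\Sigma_{\lambda^*/2}$, and hence $\nabla u=0$ on $\{y=0\}$ by the resulting symmetry; once that is in place, UCP may be applied from the boundary exactly as in the first proof of Theorem~\ref{T:1}.
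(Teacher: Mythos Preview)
Your proposal is correct and follows essentially the same approach as the paper: the paper also deduces $\{y=\lambda^*\}\subseteq\{u=0\}\cap\{\nabla u=0\}$, applies the unique continuation principle to the horizontal translates $u^t$ directly from the \emph{interior} hyperplane $\{y=\lambda^*\}$ to conclude $u\equiv u^t$, and then finishes with the ODE analysis. Your anticipated obstacle is therefore not an issue---the paper invokes \cite{FVb} from $\{y=\lambda^*\}$ exactly as you do, so the backup plan of extending the rotating-plane machinery to $\R^N$ is unnecessary.
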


\begin{proof}
It follows by the definition of $\lambda^*$ that
$
\{y=\lambda^*\}\subseteq \{u=0\}\,.
$
Since $u$ is nonnegative this implies that
\[
\{y=\lambda^*\}\subseteq \{\nabla u=0\}\,.
\]

To conclude, we argue as at the end of the proof of Theorem \ref{T:1}.
Indeed, for every $t \in \R^{N-1}$, the function $u^t(x',y) : = u(x'+t,y) $ is a nonnegative solution of \eqref{E:P} with $ u^t = \nabla u^t = 0$ on the set $ \{ y =\lambda^* \}$ and the unique continuation principle implies that $ u \equiv u^t $ on $\R^N_+$. This immediately gives that $u$ depends only on the variable $y$, i.e.,

\[
u(x',y)\,=\,u_0(y) \qquad \forall \, (x',y) \in \R^N_+
\]

where $u_0 \in C^2([0, +\infty)) $ is the unique solution of $u_0''+f(u_0)=0$ with $u_0'(0)=u_0(0)=0=u_0'(\lambda^*)=u_0(\lambda^*)$.

A simple analysis of \eqref{oneDsup} yields that $u_0$ is periodic of period $\lambda^*$ with $u_0$ even with respect to $\{y=\frac{\lambda^*}{2}\}$ and satisfying $u_0' >0$ in $(0, \frac{\lambda^*}{2})$. This concludes the proof.

\end{proof}



Now we turn to the case of coercive epigraphs and we prove Theorem \ref{T:Epi}.

\begin{proof}[\underline{Proof of Theorem \ref{T:Epi}}]

We plan to use the classical moving plane procedure \cite{S}.
We use the notation $\Sigma_\lambda:= \{(x',y) \in \R^N \, |\, 0<y<\lambda\}$ and we denote by $R_\lambda(x',y)$ the point symmetric to $(x',y)$ with respect to the hyperplane $\{y=\lambda\}$, namely
\[
R_\lambda(x',y)\,:=\, (x',2\lambda-y)\,.
\]
We set $u_\lambda(x',y)\,=\,u(R_\lambda(x',y))\,=\,u(x',2\lambda-y)$ and
$\Omega_\lambda\,:=\, \Omega\cap\Sigma_\lambda\,.$ Since $\Omega$ is a smooth coercive epigraph, we have that $\Omega_\lambda\,$ is a bounded open set (possibly non connected) satisfying $R_\lambda(\Omega_\lambda)\subset\Omega$,  for every $\lambda>0$.

Given any $\delta>0$, we can find $\lambda_0=\lambda_0(\delta)>0$ such that $\mathcal L (\Omega_\lambda)< \delta$ for any $0<\lambda\leqslant \lambda_0$. Therefore we can take $\delta$ small such that the \emph{weak comparison principle in small domains} (Proposition \ref{maxpri}) applies.
Since $u\leqslant u_\lambda$ on $\partial \Omega_\lambda$, Proposition \ref{maxpri} yields
\[
u\leqslant u_\lambda\qquad\text{in}\quad \Omega_\lambda\quad \text{for every}\,\,\,0<\lambda\leqslant \lambda_0\,.
\]
Therefore the set
\[
\Lambda\,:=\,\{\lambda>0\,\,:\,\,u\leqslant u_\mu\,\,\,\text{in}\,\,\Omega_\mu\quad\text{for any}\,\,0<\mu\leqslant\lambda\}
\]
is not empty and
\[
\bar\lambda\,:=\,\sup\,\,\Lambda \in (0, +\infty].
\]
Assume by contradiction that $ \bar \lambda <+\infty$. By continuity it follows that $u\leqslant u_{\bar\lambda}$ in $\Omega_{\bar\lambda}$.

Let us now prove that
\begin{equation}\label{disugstretta}
u<u_{\bar\lambda}\quad\text{in}\,\,\Omega_{\bar\lambda}.
\end{equation}

To this end, we observe that $u$ is \emph {positive} in a neighborhood of the boundary. Namely, for any $x\in\partial\Omega$, there exists $\rho=\rho(x)>0$ such that
\begin{equation}\label{positivadentro}
u>0\qquad\text{in}\quad \Omega\cap B_\rho(x)\,.
\end{equation}
A proof of this fact can be found, for instance, in \cite{BCN1} (cf. Lemma 4.1 on p. 485).

If \eqref {disugstretta} were false, there would exist a point $x_0 \in \Omega_{\bar\lambda}$ such that $ u(x_0) = u_{\bar\lambda}(x_0)$. Thence, the strong maximum principle would imply
\[
u \equiv u_{\bar\lambda}\quad\text{in} \quad \omega_{\bar\lambda},
\]

where $ \omega_{\bar\lambda}$ is the connected component of $\Omega_{\bar\lambda}$ containing the point $ x_0$. This clearly contradicts \eqref{positivadentro} (it would imply $u=0$ on $R_{\bar\lambda}(\partial\Omega\cap\omega_{\bar\lambda})$).  Hence \eqref {disugstretta} is satisfied.


In order to exploit Proposition \ref{maxpri} once again, let us fix
a bounded domain $D \subset \Omega$ containing the bounded set
$\Omega_{\bar \lambda +1} \cup R_{\bar\lambda + 1}(\Omega_{\bar\lambda +1})$ and then consider a compact set $\mathcal K\subset \Omega_{\bar\lambda}$ such that
\[
\mathcal L(\Omega_{\bar\lambda}\setminus \mathcal K)\leqslant \frac{\vartheta}{10}\,
\]

where $\vartheta$ is given by Proposition \ref{maxpri}.


It follows by compactness that, for some $ \sigma >0$,
\[
(u_{\bar\lambda}-u)\geqslant \sigma>0\qquad\text{on} \quad \mathcal K
\]
and therefore, by the uniform continuity of $u$ on compact sets, we can find $\varepsilon_0 \in (0,1)$ such that
\[
(u_{\bar\lambda+\varepsilon}-u)\geqslant \frac{\sigma}{2}>0\qquad\text{and}\qquad \mathcal L (\Omega_{\bar\lambda+\varepsilon}\setminus \Omega_{\bar\lambda})\leqslant \frac{\vartheta}{10}\quad\text{for every}\,\,\,\,0<\varepsilon<\varepsilon_0\,.
\]
Then it follows that Proposition \ref{maxpri} applies in $\Omega_{\bar\lambda+\varepsilon}\setminus\mathcal K$, for every
$0<\varepsilon <\varepsilon_0$, since
\[
\mathcal L(\Omega_{\bar\lambda+\varepsilon}\setminus\mathcal K) < \vartheta\,.
\]
Therefore $u\leqslant u_{\bar\lambda+\varepsilon}$ in $\Omega_{\bar\lambda+\varepsilon}\setminus\mathcal K$, for every
$0<\varepsilon <\varepsilon_0$, and consequently
\[
u\leqslant u_{\bar\lambda+\varepsilon}\qquad\text{in} \quad \Omega_{\bar\lambda+\varepsilon}\quad\text{for every}
\quad\,\,\,\,0<\varepsilon<\varepsilon_0\,.
\]
The latter contradicts the definition of $\bar\lambda$. Thus, we have proved that
\[
\bar \lambda=+\infty\,.
\]
As a consequence, $u$ is monotone non-decreasing in the $y$-direction. Actually, arguing exactly as above and using again that $u$ is \emph {positive} in a neighborhood of the boundary, we get
\begin{equation}
u<u_{\lambda} \qquad \text{in} \quad \Omega_{\lambda} \quad \text{for every}
\quad \lambda >0\,.
\end{equation}

Hence the Hopf's Lemma yields
\[
\partial_{x_N} u =\partial_y u>0\qquad\text{in}\quad\Omega\,.
\]
Furthermore, as a consequence, $u$ is positive in $\Omega$.
\end{proof}


\begin{thebibliography}{10}



\bibitem{BCN1} H. Berestycki, L. Caffarelli and L. Nirenberg.
\newblock Inequalitiese for second order elliptic equations with applications to  unbouded domains.
\newblock {\em Duke Math. J.}, 81, 1996,  467--494.

\bibitem{BCN2} H. Berestycki, L. Caffarelli and L. Nirenberg.
\newblock Further qualitative properties for elliptic equations  in unbouded domains.
\newblock {\em Ann. Scuola Norm. Sup. Pisa Cl. Sci. $(4)$}, 15, 1997, 69--94.

\bibitem{BCN3} H. Berestycki, L. Caffarelli and L. Nirenberg.
\newblock Monotonicity for elliptic equations in an   unbouded Lipschitz domain.
\newblock {\em Comm. Pure Appl. Math.}, 50, 1997, 1089--1112.




\bibitem{BCN5} H. Berestycki, L.A. Caffarelli and L. Nirenberg,
\newblock  Symmetry for elliptic equations in the halfspace,
\newblock {\em in Boundary value problems for PDEs and applications}. Vol. dedicated to E. Magenes, J. L. Lions et al., ed. Masson, Paris, 1993,  27 -- 42.

\bibitem{BN} H. Berestycki and L. Nirenberg.
\newblock On the method of moving planes and the sliding method,
 \newblock {\em Bolletin Soc. Brasil. de Mat Nova Ser}, 22, 1991,  1--37.


\bibitem{CLZ}  Chen, Z., Lin, C-S. and Zou, W.
\newblock Monotonicity and nonexistence results to cooperative systems in the half space
\newblock {\em J. Funct. Anal} 266 (2014), no. 2, 1088 -- 1105.
`

\bibitem{CMS}  F.~Charro, L.~Montoro and B.~Sciunzi.
\newblock Monotonicity of solutions of Fully nonlinear uniformly elliptic equations in the half-plane.
\newblock {\em J. Diff. Equations},  251(6), 2011,  1562 -- 1579.




\bibitem{DS3}L.~Damascelli and B.~Sciunzi.
\newblock Monotonicity of the solutions of some quasilinear
elliptic equations in the half-plane, and applications.
\newblock {\em Diff. Int. Eq.}, 23(5-6), 2010,   419 -- 434.

\bibitem{Dancer} E. N. Dancer.
\newblock Some notes on the method of moving planes.
 \newblock {\em Bull. Australian Math. Soc.}, 46, 1992, 425--434.

\bibitem{Dancer2} E. N. Dancer.
\newblock Some remarks on half space problems.
 \newblock {\em Disc. Cont. Dyn. Sist.}, 25(1), 2009,  83--88.



\bibitem{EL} M. J. Esteban and P.L. Lions.
\newblock Existence and nonexistence results for semilinear elliptic
              problems in unbounded domains.
 \newblock {\em Proc. Roy. Soc. Edinburgh Sect. A}, 93(1-2), 1982/83, 1--14.



\bibitem{Fa} A.~Farina.
\newblock Rigidity and one-dimensional symmetry for semilinear elliptic equations in the whole of $\mathbb{R}^N$ and in half spaces.
\newblock {\em Adv. Math. Sci. Appl.}, 13(1), 2003,  65 -- 82.


\bibitem{Fa2} A.~Farina.
\newblock On the classification of solutions of the {L}ane-{E}mden
              equation on unbounded domains of {$\Bbb R^N$}.
\newblock {Journal de Math\'ematiques Pures et Appliqu\'ees. Neuvi\`eme
              S\'erie}, 87(5), pp. 537 -- 561, 2007.


\bibitem{FSV} A. Farina, B. Sciunzi and E. Valdinoci.
\newblock Bernstein and De Giorgi type problems: new results via a geometric approach.
\newblock {\em Ann. Scuola Norm. Sup. Pisa Cl. Sci. $(4)$},7(4), 2008, 741--791.


\bibitem{soave} A. Farina and N. Soave.
\newblock Symmetry and uniqueness of nonnegative solutions of some
              problems in the halfspace.
\newblock {\em J. Math. Anal. Appl.}, 403(1), 2013, 215--233.


\bibitem{FV} A. Farina  and E. Valdinoci.
\newblock Flattening results for elliptic {PDE}s in unbounded domains
              with applications to overdetermined problems.
\newblock {\em Arch. Ration. Mech. Anal.}, 195(3), 2010, 1025--1058.

\bibitem{FVb} A. Farina  and E. Valdinoci.
\newblock On partially and globally overdetermined problems of elliptic
type.
\newblock {\em American Journal of Mathematics}, 135(6), 2013, 1699--1726.

\bibitem{GiSp} B. Gidas and J. Spruck.
\newblock A priori bounds for positive solutions of nonlinear elliptic equations,
\newblock {\em Comm. Partial Differential Equations}, 6 (1981), 883--901.


\bibitem{GNN}
B.~Gidas, W.~M. Ni,  L.~Nirenberg,
\newblock Symmetry and related properties via the maximum
principle,
\newblock  \emph{Comm. Math. Phys.}, 68, 1979, 209 -- 243.


\bibitem{GT}  D. Gilbarg and N.S. Trudinger.
\newblock Elliptic partial differential equations of second order.
\newblock Reprint of the 1998 edition. Classics in Mathematics. Springer-Verlag, Berlin, 2001.


\bibitem{PSB} P.~Pucci and J.~Serrin.
\newblock The maximum principle.
\newblock Birkhauser, Boston (2007).



\bibitem{S} J.~Serrin,
\newblock A symmetry problem in potential theory.
\newblock {\em Arch. Rational Mech. Anal}, 43(4), 1971, 304--318.





\end{thebibliography}
\end{document}